\newtheorem{remark}{Remark}[section]
\newtheorem{theorem}[remark]{Theorem}
\newtheorem{proposition}[remark]{Proposition}
\newtheorem{corollary}[remark]{Corollary}
\newtheorem{conjecture}[remark]{Conjecture}
\newcommand{\wdim}{\operatorname{\mathrm{wdim}}}
\newcommand{\cp}{\,\square\,}
\begin{document}

\title{On the weak $k$-metric dimension of Hamming graphs}

\author{Elena Fern\'andez$^{a,}$\thanks{\texttt{elena.fernandez@uca.es}}
\and Sandi Klav\v zar$^{b,c,d,}$\thanks{\texttt{sandi.klavzar@fmf.uni-lj.si}}
\and Dorota Kuziak$^{a,}$\thanks{\texttt{dorota.kuziak@uca.es}}
\and Manuel Mu\~noz-M\'arquez$^{a,}$\thanks{\texttt{manuel.munoz@uca.es}}
\and Ismael G. Yero$^{e,}$\thanks{\texttt{ismael.gonzalez@uca.es}}
}
\maketitle

\begin{center}
$^a$ Departamento de Estad\'istica e Investigaci\'on Operativa, Universidad de C\'adiz, Spain \\

\medskip
$^b$ Faculty of Mathematics and Physics, University of Ljubljana, Slovenia\\
\medskip
	
$^c$ Institute of Mathematics, Physics and Mechanics, Ljubljana, Slovenia\\
\medskip
	
$^d$ Faculty of Natural Sciences and Mathematics, University of Maribor, Slovenia\\
\medskip

$^e$ Departamento de Matem\'{a}ticas, Universidad de C\'adiz, Algeciras Campus, Spain \\
\medskip
\end{center}

\begin{abstract}
Given a connected graph $G$, a set of vertices $X\subset V(G)$ is a weak $k$-resolving set of $G$ if for each two vertices $y,z\in V(G)$, the sum of the values $|d_G(y,x)-d_G(z,x)|$ over all $x\in X$ is at least $k$, where $d_G(u,v)$ stands for the length of a shortest path between $u$ and $v$. The cardinality of a smallest weak $k$-resolving set of $G$ is the weak $k$-metric dimension of $G$, and is denoted by $\mathrm{wdim}_k(G)$. In this paper, $\mathrm{wdim}_k(K_n\,\square\,K_n)$ is determined for every $n\ge 3$ and every $2\le k\le 2n$. An improvement of a known integer linear programming formulation for this problem is developed and implemented for the graphs $K_n\,\square\,K_m$. Conjectures regarding these general situations are posed.
\end{abstract}

\noindent
{\bf Keywords:} weak $k$-metric dimension,  weak $k$-resolving sets, Cartesian products, Hamming graphs \\

\noindent
{\bf AMS Subj.\ Class.\ (2020)}:  05C12, 05C76, 90C05

%%%%%%%%%%%%%%%%%%%%%%%%%%%%%%%
\section{Introduction}
%%%%%%%%%%%%%%%%%%%%%%%%%%%%%%%

The area of metric dimension related parameters in graphs has been a very active one in the last two decades, although its notion dates back to about 70 years ago when the related concept was introduced for general metric spaces in \cite{Blumenthal}. For the specific case of graphs, the first information on this topic are coming from the 1970's due to Slater \cite{Slater1975}, and independently, also by Harary and Melter \cite{Harary1976}. This topic attracted several investigation in in various directions including combinatorial, computational, and applied. For instance, an interesting application appeared in \cite{Tillquist-2019}, where the authors designed some sort of methodology for embedding biological sequence data into Hamming graphs. To do so, they applied some metric dimension notions. The obtained embedding was further used in machine learning algorithms that learn classifiers from such datasets. Some other recent works on the classical metric dimension of graphs are for instance \cite{Bailey-2023,Dankelmann-2023,foster-2024,Tapendra}. In addition, for more information on this concept and related ones, we suggest the two surveys \cite{Kuziak,Tillquist}.

One of the most common developments concerning the metric dimension of graphs relates to describing different variations of the concept in order to give more insight into the classical concept, or to better understand some practical situations in which extra properties are needed. The compendium \cite{Kuziak} surveys a large number of these variations, and the main contributions about each of them. Very recently, a variation called weak $k$-metric dimension was presented in \cite{Peterin}, which is an attempt to soften the more restrictive notion of $k$-metric dimension of graphs, already known from \cite{Estrada-Moreno2013}.

Throughout our whole exposition $G=(V(G),E(G))$ represents a connected undirected graph without loops and multiple edges.
Given three vertices $x,y,z\in V(G)$, it is said that
$$\Delta_z(x,y)=|d_G(x,z)-d_G(y,z)|\,,$$
where the notation $d_G(a,b)$ stands for the number of edges on a shortest $a,b$-path in $G$, i.e., the distance between $a$ and $b$.
Consider a set $S\subseteq V(G)$ and an integer $k\ge 1$.
The set $S$ is known as a \emph{weak $k$-resolving set} for $G$ if it is satisfied that $$\sum_{w\in S}\Delta_w(x,y)\ge k$$ for each two vertices $x,y\in V(G)$.
The \emph{weak} $k$-\emph{metric dimension} of $G$, written $\wdim_{k}(G)$, is the cardinality of a smallest weak $k$-resolving set of $G$.
Any weak $k$-resolving set having cardinality equal to $\wdim_{k}(G)$ is called a \emph{weak $k$-metric basis} for $G$.
The concepts above were recently defined in \cite{Peterin}.
It is clear that a graph $G$ does not have weak $k$-resolving sets for every integer $k$.
In this sense, by $\kappa(G)$ we represent the largest integer $k$ such that $G$ contains a weak $k$-resolving set.
In addition, it is also said that a graph $G$ is {\em weak $\kappa(G)$-metric dimensional}.

The metric dimension of $K_n\cp K_n$ was studied in \cite{Caceres-2007}, where the formula $\dim(K_n\cp K_n)=\left\lfloor\frac{4n-2}{3}\right\rfloor$ was proved, as a part of a more general result. On the other hand, it is known from \cite[Corollary 2]{Peterin} that $\dim(G)=\wdim_1(G)$ for any graph $G$. Thus, in view of these comments,
$$\wdim_1(K_n\cp K_n)=\left\lfloor\frac{4n-2}{3}\right\rfloor\,.$$
In this paper we complement this result, by determining $\wdim_k(K_n\cp K_n)$ for any integer $n\ge 2$ and any feasible $k\ge 2$. In fact, we prove the following formula.

\begin{theorem}
\label{thm:wdim-Kn-Kn-all}
If $n\ge 3$ and $2\le k\le 2n$, then
\begin{equation*}
\wdim_{k}(K_n\cp K_n) =
\begin{cases}
    \left\lceil \frac{4n}{3}\right\rceil; & \mbox{if $k=2$}\,,\\[0.2cm]
    n\left\lceil\frac{k}{2}\right\rceil; & \mbox{if $k=3$ or $k$ is even}\,, \\[0.2cm]
    n\left\lceil\frac{k}{2}\right\rceil-1; & \mbox{otherwise}\,.
\end{cases}
\end{equation*}
\end{theorem}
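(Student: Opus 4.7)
I will identify $V(K_n\cp K_n)$ with $[n]^2$, so the graph distance is the Hamming distance, taking values in $\{0,1,2\}$. For $S\subseteq[n]^2$, let $r_i$, $c_j$ denote the row and column sums and $M_{i,j}\in\{0,1\}$ the indicator of $(i,j)\in S$. A direct case analysis on whether $x,y$ share a coordinate or disagree in both yields closed forms for $\sum_{w\in S}\Delta_w(x,y)$, whence $S$ is weak $k$-resolving if and only if $r_a+r_c\ge k$ for all $a\ne c$, $c_b+c_d\ge k$ for all $b\ne d$, and $r_a+r_c+c_b+c_d-2M_{a,d}-2M_{c,b}\ge k$ for all $a\ne c$, $b\ne d$ (the \emph{diagonal inequality}). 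The rest of the proof analyzes this system.

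\textbf{Lower bound.} Optimizing $\sum_i r_i$ subject only to $r_a+r_c\ge k$ gives $nk/2$ when $k$ is even and $n(k+1)/2-1$ when $k$ is odd, the latter realized by a single short row of sum $(k-1)/2$. This matches the theorem except when $k\in\{2,3\}$. For $k=3$, I will show that $|S|=2n-1$ is infeasible: both row and column profiles are forced to be $(1,2,\dots,2)$; writing $(a,b^{\ast})$ and $(a^{\ast},d)$ for the unique entries of $S$ in the deficient row and column, the case $b^{\ast}=d$ forces via the diagonal inequality $M_{c,b}=0$ for every $c\ne a$, $b\ne d$, hence $|S|=1$; the case $b^{\ast}\ne d$ makes the diagonal inequality at $(a,d),(a^{\ast},b^{\ast})$ evaluate to $1+2+1+2-2-2=2<3$; both are contradictions. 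For $k=2$, the bound $\lceil 4n/3\rceil$ will be obtained by combining diagonal inequalities over triples of rows, showing that every three rows of a feasible $S$ must contain at least four vertices, and then summing over a covering family.

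\textbf{Upper bound.} For even $k\ge 4$, I will take $S$ to be the union of $k/2$ edge-disjoint perfect matchings of $K_{n,n}$; then $r_i=c_j=k/2$, and the diagonal quantity equals $2k-2(M_{a,d}+M_{c,b})\ge 2k-4\ge k$. For odd $k\ge 5$, removing one entry from a $((k+1)/2)$-regular such pattern gives $|S|=n\lceil k/2\rceil-1$; the minimum pair-sum of rows or columns is still $k$, and $2k-4\ge k$ keeps the diagonal inequality slack. For $k=3$, two edge-disjoint matchings already achieve $|S|=2n$ with all diagonal values at least $4$. For $k=2$, I plan to exhibit an explicit $\lceil 4n/3\rceil$-vertex set depending on $n\bmod 3$ and verify the three constraint families directly.

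\textbf{Main obstacle.} The $k=2$ case is the crux of the argument: both the tight lower bound and the matching construction require a non-local use of the diagonal inequality, and the construction must be split according to $n\bmod 3$. All remaining cases reduce cleanly to near-regular decompositions of $K_{n,n}$ together with the slack $2k-4\ge k$ that holds whenever $k\ge 4$.
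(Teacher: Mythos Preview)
Your reformulation in terms of row sums, column sums, and the diagonal inequality is correct and matches what the paper eventually derives in its ILP section; your treatment of $k\ge 3$ (both bounds) is essentially the same as the paper's, with the matchings playing the role of the paper's ``diagonals'' $D_i$.

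The genuine gap is your $k=2$ lower bound. The claim ``every three rows of a feasible $S$ contain at least four vertices'' is false. Take $n=9$ and let $S$ correspond to a bipartite graph on $[9]\cup[9]'$ whose components are six copies of $P_3$: three of the form row--column--row (so rows $1,\dots,6$ each have $r_i=1$, and columns $1,2,3$ have $c_j=2$) and three of the form column--row--column (so rows $7,8,9$ have $r_i=2$ and columns $4,\dots,9$ have $c_j=1$). Then $|S|=12=\lceil 4\cdot 9/3\rceil$, every row/column pair sums to at least $2$, and for any $(i,j)\in S$ one has $r_i+c_j\ge 3$ (one endpoint of each edge has degree $2$ in a $P_3$), from which the diagonal inequality $r_a+r_c+c_b+c_d-2M_{a,d}-2M_{c,b}\ge 2$ follows in all cases. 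So $S$ is weak $2$-resolving, yet rows $1,2,3$ together contain only three vertices. Your averaging over row-triples therefore cannot produce the bound $\lceil 4n/3\rceil$.

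The paper's lower bound for $k=2$ is obtained globally rather than locally: it encodes $S$ as the edge set of a bipartite graph $G_Y$ on $[n]\cup[n]'$, shows that a weak $2$-resolving $S$ forces $G_Y$ to have no isolated vertices and no $K_2$-components, so every component has at least $3$ vertices; hence the number of components is at most $\lfloor 2n/3\rfloor$ and $|S|=|E(G_Y)|\ge 2n-\lfloor 2n/3\rfloor=\lceil 4n/3\rceil$. You will need an argument of this global type; the diagonal inequality alone, applied to triples of rows, does not suffice.
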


In addition, an integer linear programming formulation for this problem, known from \cite{Peterin}, is improved and implemented for the graphs $K_n\,\square\,K_m$. Conjectures regarding these general situations are posed.

%%%%%%%%%%%%%%%%%%%%%%%%%%%%%%%
\section{Preliminaries}
\label{sec:preliminaries}
%%%%%%%%%%%%%%%%%%%%%%%%%%%%%%%

Unless stated otherwise, all graphs considered are connected.  If $G$ is a graph, $S\subseteq V(G)$, and $x,y\in V(G)$, then let
$$\Delta_S(x, y) = \sum_{s\in S} \Delta_s(x, y)\,.$$
If $S = V(G)$, we simplify the notation $\Delta_{V(G)}(x, y)$ to $\Delta(x, y)$. Having this notation, we can recall the following fundamental fact.

\begin{proposition} {\rm \cite[Observation 5]{Peterin}}
\label{prop:kappa=min-Delta}
If $G$ is a graph, then $$\kappa(G) = \min \{ \Delta(x, y):\ x, y \in V(G), x\ne y\}\,.$$
\end{proposition}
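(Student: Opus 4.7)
The plan is to prove the two inequalities between $\kappa(G)$ and the quantity $k^{\star}:=\min\{\Delta(x,y)\colon x,y\in V(G),\ x\ne y\}$ separately, using only the definitions and the nonnegativity of each term $\Delta_{s}(x,y)=|d_{G}(x,s)-d_{G}(y,s)|$.

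First, I would establish $\kappa(G)\le k^{\star}$. Let $S\subseteq V(G)$ be any weak $k$-resolving set, and choose a pair $x^{\star},y^{\star}$ of distinct vertices realising the minimum $k^{\star}$. Because $\Delta_{s}(x^{\star},y^{\star})\ge 0$ for every vertex $s$, passing from $S$ to the larger set $V(G)$ can only enlarge the sum. Hence
\[
k \;\le\; \sum_{w\in S}\Delta_{w}(x^{\star},y^{\star}) \;\le\; \sum_{w\in V(G)}\Delta_{w}(x^{\star},y^{\star}) \;=\; \Delta(x^{\star},y^{\star}) \;=\; k^{\star}.
\]
Taking the maximum over all $k$ admitting such a set gives $\kappa(G)\le k^{\star}$.

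Second, for the reverse inequality $\kappa(G)\ge k^{\star}$, I would verify that $V(G)$ itself is a weak $k^{\star}$-resolving set. Indeed, for every pair of distinct vertices $x,y\in V(G)$ the definition of $k^{\star}$ as a minimum yields
\[
\sum_{w\in V(G)}\Delta_{w}(x,y) \;=\; \Delta(x,y) \;\ge\; k^{\star},
\]
so the whole vertex set certifies the existence of a weak $k^{\star}$-resolving set, whence $\kappa(G)\ge k^{\star}$.

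There is essentially no obstacle here: the proposition is a direct unpacking of the definitions, the only subtlety being the observation that nonnegativity of the summands makes $V(G)$ the extremal choice as far as the parameter $k$ is concerned. Combining the two inequalities completes the proof.
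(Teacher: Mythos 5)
Your proof is correct: both inequalities follow exactly as you argue, with $V(G)$ witnessing $\kappa(G)\ge k^{\star}$ and the nonnegativity of the terms $\Delta_{w}(x,y)$ giving $\kappa(G)\le k^{\star}$. The paper itself offers no proof of this proposition---it is quoted verbatim from \cite[Observation~5]{Peterin}---and your argument is the standard, essentially forced unpacking of the definitions, so there is nothing to compare beyond noting that it is the expected one.
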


Let $G$ and $H$ be any (connected) graphs, and $G\cp H$ be their Cartesian product, which is a graph defined on the vertex set $V(G)\times V(H)$, and having edges $(g,h)(g',h')$ if either $g=g'$ and $hh'\in E(H)$; or $gg'\in E(G)$ and $h=h'$. Throughout the paper, for the complete graph $K_n$, we will adopt the convention $V(K_n) = \mathbb{Z}_n$ and hence, $V(K_n\cp K_m)=\mathbb{Z}_n\times \mathbb{Z}_m$. Moreover, if $i\in V(K_n)$, then by $^i\,K_m$ we denote the subgraph of $K_n\cp K_m$ induced by the vertices $\{i\}\times \mathbb{Z}_m$, and call it a (vertical) \textit{layer}. Symmetrically, for $j\in V(K_m)$, the (horizontal) layer is the subgraph induced by $\mathbb{Z}_n\times \{j\}$, and denoted $K_n^j$.

In order to complete this preliminary section, we determine the suitable values of $k$ for which $\wdim_k(G)$ can be computed, when $G$ is a Hamming graph.

\begin{theorem}
\label{thm:hamming}
If $r\ge 2$ and $n_1 \ge n_2 \ge \cdots \ge n_r \ge 2$, then
$$\kappa(K_{n_1}\cp K_{n_2}\cp \cdots \cp K_{n_r}) = 2n_2\cdots n_r\,.$$
In particular, if $n_1=2$, then $\kappa(Q_r) = 2^r$.
\end{theorem}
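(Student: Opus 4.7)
The plan is to use Proposition~\ref{prop:kappa=min-Delta} to reduce to computing the minimum of $\Delta(x,y)$ over pairs of distinct vertices of $H:=K_{n_1}\cp\cdots\cp K_{n_r}$. Since distances in a Hamming graph are Hamming distances, for fixed $x,y$ with set of differing coordinates $S\subseteq\{1,\ldots,r\}$ and any vertex $z$, a direct computation gives $|d_H(x,z)-d_H(y,z)|=|b(z)-a(z)|$, where $a(z)=|\{i\in S:z_i=x_i\}|$ and $b(z)=|\{i\in S:z_i=y_i\}|$. This value depends only on $z|_S$, so summing over $z$ factorizes: the coordinates outside $S$ contribute a multiplicative factor $N_{\bar S}:=\prod_{i\notin S}n_i$, and each coordinate $i\in S$ contributes independently either a match with $x$, a match with $y$, or neither (with $1$, $1$, and $n_i-2$ realizations respectively). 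Encoding these three choices by $\epsilon_i\in\{-1,0,+1\}$ yields
$$\Delta(x,y)=N_{\bar S}\cdot T(S),\qquad T(S):=\sum_{\epsilon\in\{-1,0,+1\}^S}\Big(\prod_{i\in S:\,\epsilon_i=0}(n_i-2)\Big)\Big|\sum_{i\in S}\epsilon_i\Big|.$$

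Choosing $x,y$ that differ only in coordinate $1$ (where $n_1$ is maximal) gives $|S|=1$, $T(S)=2$, and $\Delta(x,y)=2n_2\cdots n_r$, furnishing the upper bound $\kappa(H)\le 2n_2\cdots n_r$. For the matching lower bound I would show by induction on $|S|$ that, whenever $i^*\in S$ satisfies $n_{i^*}=\max_{i\in S}n_i$,
$$T(S)\ \ge\ 2\prod_{i\in S\setminus\{i^*\}}n_i.$$
The induction rests on splitting the defining sum of $T(S)$ according to $\epsilon_{i^*}\in\{-1,0,+1\}$ and applying the piecewise identity $|Y+1|+|Y-1|=2\max(|Y|,1)$ for integer $Y$: together with the extra $(n_{i^*}-2)|Y|$ coming from $\epsilon_{i^*}=0$, the cases $Y=0$ and $|Y|\ge 1$ collapse to the clean recursion
$$T(S)=2P_0+n_{i^*}\cdot T(S\setminus\{i^*\}),$$
where $P_0\ge 0$ is the weighted count of patterns on $S\setminus\{i^*\}$ with zero sum. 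Thus $T(S)\ge n_{i^*}\,T(S\setminus\{i^*\})$, and the inductive hypothesis closes the step because $n_{i^*}$ dominates every remaining $n_i$. Multiplying by $N_{\bar S}$ and using the global bound $n_{i^*}\le n_1$ gives $\Delta(x,y)\ge 2n_1 n_2\cdots n_r/n_{i^*}\ge 2n_2\cdots n_r$, matching the upper bound. The hypercube statement $\kappa(Q_r)=2^r$ then follows by specialization to $n_1=\cdots=n_r=2$.

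The main obstacle I anticipate is the bookkeeping behind the recursion for $T(S)$: one must verify that the three contributions from $\epsilon_{i^*}\in\{-1,0,+1\}$ recombine exactly via the piecewise identity for $|Y\pm 1|$, with the factor $n_{i^*}$ arising naturally as $2+(n_{i^*}-2)$ in the nonzero case and the residual $2P_0$ being discarded because it is nonnegative. Once that identity is in place, the induction and the final arithmetic reducing $N_{\bar S}\cdot\prod_{i\in S\setminus\{i^*\}}n_i$ to a quantity at least $n_2\cdots n_r$ are essentially mechanical.
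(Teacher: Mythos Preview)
Your proposal is correct. The factorization $\Delta(x,y)=N_{\bar S}\,T(S)$ is valid, the recursion $T(S)=2P_0+n_{i^*}\,T(S\setminus\{i^*\})$ follows exactly as you describe (the expression $|Y+1|+|Y-1|+(n_{i^*}-2)|Y|$ collapses to $2$ when $Y=0$ and to $n_{i^*}|Y|$ when $|Y|\ge 1$), and the induction on $|S|$ with $i^*$ chosen maximal closes because $n_{i^*}\ge n_{i^{**}}$ for the new maximum $i^{**}$ in $S\setminus\{i^*\}$. The final arithmetic $N_{\bar S}\cdot\prod_{i\in S\setminus\{i^*\}}n_i=\prod_{i\ne i^*}n_i\ge n_2\cdots n_r$ is correct.

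Your route differs substantially from the paper's. The paper treats the two cases $d_G(x,y)=1$ and $d_G(x,y)\ge 2$ separately: for adjacent $x,y$ it computes $\Delta(x,y)=2\prod_{i\ne j}n_i$ exactly (where $j$ is the unique differing coordinate), and for $d_G(x,y)\ge 2$ it fixes one differing coordinate $j$ and asserts that every vertex $u$ with $u_j\in\{x_j,y_j\}$ contributes $1$ to $\Delta(x,y)$, yielding $\Delta(x,y)\ge 2\prod_{i\ne j}n_i$ directly. That per-vertex claim is actually not literally true when $x$ and $y$ differ in further coordinates (e.g.\ $x=(0,0)$, $y=(1,1)$, $u=(0,1)$ contributes $0$), so the paper's argument in this case is at best heuristic. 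Your approach bypasses this issue entirely: by deriving an exact combinatorial expression for $\Delta(x,y)$ and bounding it via a clean recursion, you never rely on individual vertex contributions, and you obtain the sharper intermediate inequality $T(S)\ge n_{i^*}T(S\setminus\{i^*\})$ along the way. The cost is a bit more bookkeeping; the gain is a rigorous and self-contained lower bound.
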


\begin{proof}
Let $n_1 \ge \cdots \ge n_r \ge 2$, where $r\ge 2$. Set $G = K_{n_1}\cp \cdots \cp K_{n_r}$ for the rest of the proof. Throughout the proof we will use the fact the the distance between two vertices of $G$ is equal to the number of coordinates in which they differ. Let $x=(x_1, \ldots, x_r)$ and $y=(y_1, \ldots, y_r)$ be arbitrary, different vertices of $G$. We consider the following cases.

Assume first that $d_G(x,y) = 1$. Let $j\in [r]$ be the unique index for which we have $x_j\ne y_j$. If $u=(u_1, \ldots, u_r)$ is a vertex of $G$ with $u_j \in [r]\setminus \{x_j,y_j\}$, then $d_G(x,u) = d_G(y,u)$ and so $\Delta_u(x,y) = 0$. Assume next that $u_j = x_j$ $(\ne y_j)$. Then $d_G(y,u) = d_G(x,u) + 1$ and therefore $\Delta_u(x,y) = 1$. There are $\pi_j = \prod_{ \overset{i=1}{i\ne j}}^r n_i$ vertices $u$ with $u_j = x_j$, where $x$ is one among them. There are the same number of vertices $u$ with $u_j = y_j$ $(\ne x_j)$, and these vertices also contribute $\pi_j$ to $\Delta(x,y)$. It follows that $\Delta(x,y) = 2\pi_j$. Since $n_1 \ge \cdots \ge n_r$, we get
$$
\min \{\Delta(x,y):\ x,y\in V(G), x\ne y\} \le
\min \{\Delta(x,y):\ xy\in E(G)\} = 2\pi_1 = 2n_2\cdots n_r\,.$$

Assume now that $d_G(x,y) \ge 2$. Let $j$ be an arbitrary coordinate such that $x_{j}\ne y_{j}$. Then, as above, each vertex $u$ with $u_{j}=x_{j}$ contributes $1$ to $\Delta(x,y)$, and the same holds for each vertex $u$ with $u_{j}=y_{j}$. Hence
$$\Delta(x,y) \ge 2 \prod_{ \overset{i=1}{i\ne j}}^r n_i \ge 2\prod_{i=2} ^r n_i\,.$$
Proposition~\ref{prop:kappa=min-Delta} completes the argument for the formula.

The particular case of hypercubes follows since $Q_1 \cong K_2$ and $\kappa(K_2) = 2$, and since $Q_r$ is isomorphic to the Cartesian product of $r$ copies of $K_2$.
\end{proof}

%%%%%%%%%%%%%%%%%%%%%%%%%%%%%%%%%%%%%%
\section{Proof of Theorem \ref{thm:wdim-Kn-Kn-all}}
\label{sec:computing-wdim}
%%%%%%%%%%%%%%%%%%%%%%%%%%%%%%%%%%%%%%

We remark that $\kappa(K_n\cp K_n)=2n$, by Theorem \ref{thm:hamming}, and so, we next proceed to compute each of the values of $\wdim_{k}(K_n\cp K_n)$ for any $n\ge 3$. Through the proof, we assume $n\ge 3$ and set $G = K_n\cp K_n$. We will split the argument into several cases separated into subsections.

\subsection{The case $4\le k\le 2n$}\label{subsc:gen-case}

We recall that we are going to prove that
\begin{equation*}
 \wdim_{k}(K_n\cp K_n) =
\begin{cases}
    n\left\lceil\frac{k}{2}\right\rceil; & \mbox{if $k\ge 4$ is even}\,, \\[0.2cm]
    n\left\lceil\frac{k}{2}\right\rceil-1; & \mbox{if $k\ge 5$ is odd}\,.
\end{cases}
\end{equation*}

First, the assertion $\wdim_{2n}(G) = n^2$ can be readily observed by considering, for instance, the vertices $(0,0)$ and $(0,1)$, because only the vertices from the layers $K_n^0$ and $K_n^1$ can contribute to $\Delta((0,0), (0,1))$. Since such vertices contribute exactly $1$, it follows that a weak $(2n)$-resolving set of $G$ must contain all the vertices of $K_n^0$ and $K_n^1$, and consequently all the vertices of $G$. Hence, in the rest we restrict our attention to the cases when $4\le k\le 2n-1$.

For $i\in \mathbb{Z}_n$ set
$$D_i = \{(i,0), (i+1,1), \ldots, (i+n-1,n-1)\}\,,$$
where the computations are done modulo $n$. Intuitively, the $D_i$s are the diagonals of $G$.

\medskip\noindent
{\bf Case 1}: $k = 2n - 2t$, for some $1\le t\le n-2$. \\
Notice that in such situation, $4\le k\le 2n-2$ (an even integer). We claim that the set
$$X_t = \bigcup_{i=t}^{n-1}D_i$$
is a weak $k$-resolving set. See Fig.~\ref{fig:set-X-t} for some fairly representative examples.

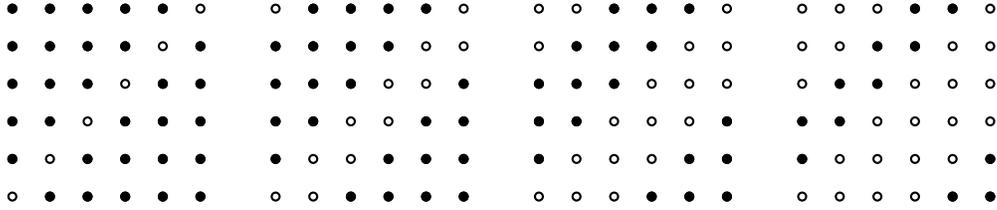
\begin{figure}[ht!]
\begin{center}
\begin{tikzpicture}[scale=0.5,style=thick,x=1cm,y=1cm]
\def\vr{3pt}
\begin{scope}[xshift=0cm, yshift=0cm] % line 0
\coordinate(x0) at (0.0,0.0);
\coordinate(x1) at (1,0);
\coordinate(x2) at (2,0);
\coordinate(x3) at (3,0);
\coordinate(x4) at (4,0);
\coordinate(x5) at (5,0);
\coordinate(y0) at (7,0.0);
\coordinate(y1) at (8,0);
\coordinate(y2) at (9,0);
\coordinate(y3) at (10,0);
\coordinate(y4) at (11,0);
\coordinate(y5) at (12,0);
\coordinate(z0) at (14,0.0);
\coordinate(z1) at (15,0);
\coordinate(z2) at (16,0);
\coordinate(z3) at (17,0);
\coordinate(z4) at (18,0);
\coordinate(z5) at (19,0);
\coordinate(t0) at (21,0.0);
\coordinate(t1) at (22,0);
\coordinate(t2) at (23,0);
\coordinate(t3) at (24,0);
\coordinate(t4) at (25,0);
\coordinate(t5) at (26,0);
% \edges		
%\draw (x5) -- (x6);
%  vertices
\foreach \i in {0,...,5}
{
\draw(x\i)[fill=white] circle(\vr);
}
\foreach \i in {0,...,5}
{
\draw(y\i)[fill=white] circle(\vr);
}
\foreach \i in {0,...,5}
{
\draw(z\i)[fill=white] circle(\vr);
}
\foreach \i in {0,...,5}
{
\draw(t\i)[fill=white] circle(\vr);
}
\foreach \i in {1,...,5}
{
\draw(x\i)[fill=black] circle(\vr);
}
\foreach \i in {2,...,5}
{
\draw(y\i)[fill=black] circle(\vr);
}
\foreach \i in {3,...,5}
{
\draw(z\i)[fill=black] circle(\vr);
}
\foreach \i in {4,...,5}
{
\draw(t\i)[fill=black] circle(\vr);
}
%\draw(x0)[fill=black] circle(\vr);
% text
%\node at (1,-1) {$G$};
\end{scope}
%%%
\begin{scope}[xshift=0cm, yshift=1cm] % line 1
\coordinate(x0) at (0.0,0.0);
\coordinate(x1) at (1,0);
\coordinate(x2) at (2,0);
\coordinate(x3) at (3,0);
\coordinate(x4) at (4,0);
\coordinate(x5) at (5,0);
\coordinate(y0) at (7,0.0);
\coordinate(y1) at (8,0);
\coordinate(y2) at (9,0);
\coordinate(y3) at (10,0);
\coordinate(y4) at (11,0);
\coordinate(y5) at (12,0);
\coordinate(z0) at (14,0.0);
\coordinate(z1) at (15,0);
\coordinate(z2) at (16,0);
\coordinate(z3) at (17,0);
\coordinate(z4) at (18,0);
\coordinate(z5) at (19,0);
\coordinate(t0) at (21,0.0);
\coordinate(t1) at (22,0);
\coordinate(t2) at (23,0);
\coordinate(t3) at (24,0);
\coordinate(t4) at (25,0);
\coordinate(t5) at (26,0);
% \edges		
%\draw (x5) -- (x6);
%  vertices
\foreach \i in {0,...,5}
{
\draw(x\i)[fill=white] circle(\vr);
}
\foreach \i in {0,...,5}
{
\draw(y\i)[fill=white] circle(\vr);
}
\foreach \i in {0,...,5}
{
\draw(z\i)[fill=white] circle(\vr);
}
\foreach \i in {0,...,5}
{
\draw(t\i)[fill=white] circle(\vr);
}
%%%
\foreach \i in {0,2,3,4,5}
{
\draw(x\i)[fill=black] circle(\vr);
}
\foreach \i in {0,3,4,5}
{
\draw(y\i)[fill=black] circle(\vr);
}
\foreach \i in {0,4,5}
{
\draw(z\i)[fill=black] circle(\vr);
}
\foreach \i in {0,5}
{
\draw(t\i)[fill=black] circle(\vr);
}
%\draw(x0)[fill=black] circle(\vr);
% text
%\node at (1,-1) {$G$};
\end{scope}
%%%
\begin{scope}[xshift=0cm, yshift=2cm] % line 2
\coordinate(x0) at (0.0,0.0);
\coordinate(x1) at (1,0);
\coordinate(x2) at (2,0);
\coordinate(x3) at (3,0);
\coordinate(x4) at (4,0);
\coordinate(x5) at (5,0);
\coordinate(y0) at (7,0.0);
\coordinate(y1) at (8,0);
\coordinate(y2) at (9,0);
\coordinate(y3) at (10,0);
\coordinate(y4) at (11,0);
\coordinate(y5) at (12,0);
\coordinate(z0) at (14,0.0);
\coordinate(z1) at (15,0);
\coordinate(z2) at (16,0);
\coordinate(z3) at (17,0);
\coordinate(z4) at (18,0);
\coordinate(z5) at (19,0);
\coordinate(t0) at (21,0.0);
\coordinate(t1) at (22,0);
\coordinate(t2) at (23,0);
\coordinate(t3) at (24,0);
\coordinate(t4) at (25,0);
\coordinate(t5) at (26,0);
% \edges		
%\draw (x5) -- (x6);
%  vertices
\foreach \i in {0,...,5}
{
\draw(x\i)[fill=white] circle(\vr);
}
\foreach \i in {0,...,5}
{
\draw(y\i)[fill=white] circle(\vr);
}
\foreach \i in {0,...,5}
{
\draw(z\i)[fill=white] circle(\vr);
}
\foreach \i in {0,...,5}
{
\draw(t\i)[fill=white] circle(\vr);
}
%%%
\foreach \i in {0,1,3,4,5}
{
\draw(x\i)[fill=black] circle(\vr);
}
\foreach \i in {0,1,4,5}
{
\draw(y\i)[fill=black] circle(\vr);
}
\foreach \i in {0,1,5}
{
\draw(z\i)[fill=black] circle(\vr);
}
\foreach \i in {0,1}
{
\draw(t\i)[fill=black] circle(\vr);
}
%\draw(x0)[fill=black] circle(\vr);
% text
%\node at (1,-1) {$G$};
\end{scope}
%%%
\begin{scope}[xshift=0cm, yshift=3cm] % line 3
\coordinate(x0) at (0.0,0.0);
\coordinate(x1) at (1,0);
\coordinate(x2) at (2,0);
\coordinate(x3) at (3,0);
\coordinate(x4) at (4,0);
\coordinate(x5) at (5,0);
\coordinate(y0) at (7,0.0);
\coordinate(y1) at (8,0);
\coordinate(y2) at (9,0);
\coordinate(y3) at (10,0);
\coordinate(y4) at (11,0);
\coordinate(y5) at (12,0);
\coordinate(z0) at (14,0.0);
\coordinate(z1) at (15,0);
\coordinate(z2) at (16,0);
\coordinate(z3) at (17,0);
\coordinate(z4) at (18,0);
\coordinate(z5) at (19,0);
\coordinate(t0) at (21,0.0);
\coordinate(t1) at (22,0);
\coordinate(t2) at (23,0);
\coordinate(t3) at (24,0);
\coordinate(t4) at (25,0);
\coordinate(t5) at (26,0);
% \edges		
%\draw (x5) -- (x6);
%  vertices
\foreach \i in {0,...,5}
{
\draw(x\i)[fill=white] circle(\vr);
}
\foreach \i in {0,...,5}
{
\draw(y\i)[fill=white] circle(\vr);
}
\foreach \i in {0,...,5}
{
\draw(z\i)[fill=white] circle(\vr);
}
\foreach \i in {0,...,5}
{
\draw(t\i)[fill=white] circle(\vr);
}
%%%%
\foreach \i in {0,1,2,4,5}
{
\draw(x\i)[fill=black] circle(\vr);
}
\foreach \i in {0,1,2,5}
{
\draw(y\i)[fill=black] circle(\vr);
}
\foreach \i in {0,1,2}
{
\draw(z\i)[fill=black] circle(\vr);
}
\foreach \i in {1,2}
{
\draw(t\i)[fill=black] circle(\vr);
}
%\draw(x0)[fill=black] circle(\vr);
% text
%\node at (1,-1) {$G$};
\end{scope}
%%%
\begin{scope}[xshift=0cm, yshift=4cm] % line 4
\coordinate(x0) at (0.0,0.0);
\coordinate(x1) at (1,0);
\coordinate(x2) at (2,0);
\coordinate(x3) at (3,0);
\coordinate(x4) at (4,0);
\coordinate(x5) at (5,0);
\coordinate(y0) at (7,0.0);
\coordinate(y1) at (8,0);
\coordinate(y2) at (9,0);
\coordinate(y3) at (10,0);
\coordinate(y4) at (11,0);
\coordinate(y5) at (12,0);
\coordinate(z0) at (14,0.0);
\coordinate(z1) at (15,0);
\coordinate(z2) at (16,0);
\coordinate(z3) at (17,0);
\coordinate(z4) at (18,0);
\coordinate(z5) at (19,0);
\coordinate(t0) at (21,0.0);
\coordinate(t1) at (22,0);
\coordinate(t2) at (23,0);
\coordinate(t3) at (24,0);
\coordinate(t4) at (25,0);
\coordinate(t5) at (26,0);
% \edges		
%\draw (x5) -- (x6);
%  vertices
\foreach \i in {0,...,5}
{
\draw(x\i)[fill=white] circle(\vr);
}
\foreach \i in {0,...,5}
{
\draw(y\i)[fill=white] circle(\vr);
}
\foreach \i in {0,...,5}
{
\draw(z\i)[fill=white] circle(\vr);
}
\foreach \i in {0,...,5}
{
\draw(t\i)[fill=white] circle(\vr);
}
%%%
\foreach \i in {0,1,2,3,5}
{
\draw(x\i)[fill=black] circle(\vr);
}
\foreach \i in {0,1,2,3}
{
\draw(y\i)[fill=black] circle(\vr);
}
\foreach \i in {1,2,3}
{
\draw(z\i)[fill=black] circle(\vr);
}
\foreach \i in {2,3}
{
\draw(t\i)[fill=black] circle(\vr);
}
%\draw(x0)[fill=black] circle(\vr);
% text
%\node at (1,-1) {$G$};
\end{scope}
%%%%
\begin{scope}[xshift=0cm, yshift=5cm] % line 5
\coordinate(x0) at (0.0,0.0);
\coordinate(x1) at (1,0);
\coordinate(x2) at (2,0);
\coordinate(x3) at (3,0);
\coordinate(x4) at (4,0);
\coordinate(x5) at (5,0);
\coordinate(y0) at (7,0.0);
\coordinate(y1) at (8,0);
\coordinate(y2) at (9,0);
\coordinate(y3) at (10,0);
\coordinate(y4) at (11,0);
\coordinate(y5) at (12,0);
\coordinate(z0) at (14,0.0);
\coordinate(z1) at (15,0);
\coordinate(z2) at (16,0);
\coordinate(z3) at (17,0);
\coordinate(z4) at (18,0);
\coordinate(z5) at (19,0);
\coordinate(t0) at (21,0.0);
\coordinate(t1) at (22,0);
\coordinate(t2) at (23,0);
\coordinate(t3) at (24,0);
\coordinate(t4) at (25,0);
\coordinate(t5) at (26,0);
% \edges		
%\draw (x5) -- (x6);
%  vertices
\foreach \i in {0,...,5}
{
\draw(x\i)[fill=white] circle(\vr);
}
\foreach \i in {0,...,5}
{
\draw(y\i)[fill=white] circle(\vr);
}
\foreach \i in {0,...,5}
{
\draw(z\i)[fill=white] circle(\vr);
}
\foreach \i in {0,...,5}
{
\draw(t\i)[fill=white] circle(\vr);
}
%%%%
\foreach \i in {0,...,4}
{
\draw(x\i)[fill=black] circle(\vr);
}
\foreach \i in {1,...,4}
{
\draw(y\i)[fill=black] circle(\vr);
}
\foreach \i in {2,...,4}
{
\draw(z\i)[fill=black] circle(\vr);
}
\foreach \i in {3,4}
{
\draw(t\i)[fill=black] circle(\vr);
}
%\draw(x0)[fill=black] circle(\vr);
% text
%\node at (1,-1) {$G$};
\end{scope}
\end{tikzpicture}
\caption{The sets (in bold) $X_1$ (a weak $10$-metric basis), $X_2$ (a weak $8$-metric basis), $X_3$ (a weak $6$-metric basis) and $X_4$ (a weak $4$-metric basis), respectively, in $K_6\cp K_6$}
\label{fig:set-X-t}
\end{center}
\end{figure}

For this sake, note first that $X_t$ contains precisely $\frac{k}{2}=n-t$ vertices in each (horizontal and vertical) layer of $G$.
Consider now arbitrary vertices $(i,j)$ and $(i',j')$ of $G$ and distinguish two different situations. If $i=i'$, then in each of the layers $K_n^j$ and $K_n^{j'}$ there are $\frac{k}{2}=n-t$ vertices (where $(i,j)$ and $(i,j')$ could belong to them) that contribute $1$ to $\Delta_{X_t}((i,j), (i,j'))$, so that $\Delta_{X_t}((i,j)(i,j')) \ge 2n-2t=k$ as required. The situation when $j=j'$ is symmetric.
Assume now that $i\ne i'$ and $j\ne j'$. Then the layers $^iK_n$, $^{i'}K_n$, $K_n^j$, and $K_n^{j'}$ are pairwise different layers that intersect in the vertices $(i,j)$, $(i,j')$, $(i',j)$, and $(i',j')$. Each of these four vertices might not contribute to $\Delta_{X_t}((i,j), (i',j'))$. Since each layer contains $\frac{k}{2}=n-t$ vertices of $X_t$, it thus follows that
$$\Delta_{X_t}((i,j), (i',j')) \ge 4\frac{k}{4} - 4 = 2k - 4 \ge k\,,$$
where the last inequality holds since $k\ge 4$. Consequently, $X_t$ is a weak $k$-resolving set as claimed. Hence $\wdim_{k}(G) \le n\frac{k}{2}=n\left\lceil\frac{k}{2}\right\rceil$.

\medskip
To prove that $\wdim_{k}(G) \ge n\left\lceil\frac{k}{2}\right\rceil$, suppose on the contrary that there exists a weak $k$-resolving set $Y$ of $G$ with $|Y|\le n\left\lceil\frac{k}{2}\right\rceil-1$. By the pigeonhole principle there exists a layer, we may assume without loss of generality to be $K_n^0$, such that $x = |Y\cap V(K_n^0)| \le \frac{k}{2}-1$.
Consider now the vertices $(0,0)$ and $(0,j)$ with $j\ne 0$. Let $y = |Y\cap V(K_n^j)|$.
Note that  $x + y = \Delta_{Y}((0,0), (0,j)) \ge k$. Since $x\le \frac{k}{2}-1$, this in turn implies that $y\ge \frac{k}{2} + 1$. As this holds for any $j\ne 0$, we consequently have $|Y| \ge x + (n-1)(\frac{k}{2}+1)$ and so, by using our assumption on the cardinality of $Y$,
$$n\frac{k}{2} > |Y| \ge x + (n-1)\left(\frac{k}{2}+1\right)\,,$$
which implies that $x < 0$, since $k \le 2n-2$, and this is not possible. This contradiction proves that $\wdim_{k}(G) \ge n\left\lceil\frac{k}{2}\right\rceil$ and thus the equality follows in the case $k$ is even.

\medskip\noindent
{\bf Case 2}: $k = 2n - 2t-1$, for some $0\le t\le n-3$. \\
Notice that in such situation, $5\le k\le 2n-1$ (an odd integer). To see that $\wdim_{k}(G) \le n\left\lceil\frac{k}{2}\right\rceil-1$, we claim that the set $X_t'$ obtained from $X_t$ by removing the vertex $(1,2)$ is a weak $k$-resolving set, i,e., $X_t'=X_t\setminus\{(1,2)\}$. Let $(i,j),(i',j')\in V(G)$ be any two arbitrary vertices. If $(1,2)\notin \{(i,j),(i',j')\}$, then we can use the argument of Case 1, that is, in its proof, while considering $\Delta_{X_t}((i,j), (i',j'))$, we have only considered contributions of $1$ of each vertex from $X_t$. Hence, by using the same arguments, deleting only one vertex from $X_t$ yields a set such that any two vertices from $G$ the new set contributes at least $\Delta_{X_t}((i,j), (i',j')) - 1$ to   $\Delta_{X_t'}((i,j), (i',j'))$.

Assume now that (WLOG) $(i',j')=(1,2)$. First notice that, since $X'_t$ is obtained from $X_t$ by removing one vertex, it holds that each layer of $G$ (vertical or horizontal)  contains $(k+1)/2$ vertices of $X'_t$, with the exception of that layers containing the vertex $(1,2)$. We have now two different situations.

\medskip\noindent
{\bf Case 2.1}: $i\ne 1$ and $j\ne 2$.\\
First, if $(i,j)\in X'_t$, then $(i,j)$ contributes with $2$ to $\Delta_{X'_t}((i,j), (i',j'))$. In addition, based on the fact that there are at least $(k+1)/2-2$ vertices in each of the four layers that contribute with $1$ to $\Delta_{X'_t}((i,j), (i',j'))$, and by the fact that $k\ge 5$, we deduce that
$$\Delta_{X'_t}((i,j), (i',j')) \ge 4\left(\frac{k+1}{2}-2\right)+2=2k-4\ge k+1.$$
On the other hand, if $(i,j)\notin X'_t$, then $(i,j)$ clearly contributes nothing to $\Delta_{X'_t}((i,j), (i',j'))$. However, now there are two layers (the ones containing $(i,j)$) such that they contain $(k+1)/2-1$ vertices in each of the two such layers that contribute with $1$ to $\Delta_{X'_t}((i,j), (i',j'))$. In addition, in the other two layers (the ones containing $(1,2)$) there are $(k+1)/2-2$ vertices in each of them that contribute with $1$ to $\Delta_{X'_t}((i,j), (i',j'))$. Hence, having again in mind that $k\ge 5$, it holds that
$$\Delta_{X'_t}((i,j), (i',j')) \ge 2\left(\frac{k+1}{2}-1\right)+2\left(\frac{k+1}{2}-2\right)=2k-5\ge k.$$

\medskip\noindent
{\bf Case 2.2}: $i=1$ or $j=2$. \\
By the symmetry of $G$, it suffices to consider that $i=1$.
Notice that there are $(k+1)/2$ vertices from $X'_t$ in the layer $K_n^j$ that contribute with $1$ to $\Delta_{X'_t}((i,j), (i',j'))$, as well as, there are $(k+1)/2-1$ vertices from $X'_t$ in the layer $K_n^2$ that contribute with $1$ to $\Delta_{X'_t}((i,j), (i',j'))$. Thus,
$$\Delta_{X'_t}((i,j)(i',j')) \ge \left(\frac{k+1}{2}\right)+\left(\frac{k+1}{2}-1\right)= k.$$

As a consequence of the arguments above, we obtain that $X'_t$ is a weak $k$-resolving set of $G$, and so, $\wdim_{k}(K_n\cp K_n)\le n\left\lceil\frac{k}{2}\right\rceil-1$ if $k\ge 5$ is odd.

\medskip
It remains to see that $\wdim_{k}(G) \ge n\left\lceil\frac{k}{2}\right\rceil-1$. Suppose on the contrary that there exists a weak $k$-resolving set $Y$ of $G$ with $|Y|\le n\left\lceil\frac{k}{2}\right\rceil-2$. By the pigeonhole principle one of the following two possibilities occur.

Assume first that there is a layer $K_n^\ell$ such that $|Y\cap V(K_n^\ell)| \le \left\lceil\frac{k}{2}\right\rceil -2$.
We may consider (WLOG) that $\ell = 0$ and let $x = |Y\cap V(K_n^0)| \le \left\lceil\frac{k}{2}\right\rceil-2$.
Consider now the vertices $(0,0)$ and $(0,j)$ with $j\ne 0$. Let $y = |Y\cap V(K_n^j)|$.
Since $x + y = \Delta_{Y}((0,0), (0,j)) \ge 2\left\lceil\frac{k}{2}\right\rceil-1$, and because $x\le \left\lceil\frac{k}{2}\right\rceil-2$, we get $y\ge \left\lceil\frac{k}{2}\right\rceil + 1$. As this holds for any $j\ne 0$, we consequently have $|Y| \ge x + (n-1)(\left\lceil\frac{k}{2}\right\rceil+1)$, and so, by using our assumption on the cardinality of $Y$ we deduce
$$n\left\lceil\frac{k}{2}\right\rceil -1 > |Y| \ge x + (n-1)\left(\left\lceil\frac{k}{2}\right\rceil+1\right),$$
which implies that $x < 0$, a contradiction.

On the other hand, assume now there exist $\ell, \ell'\in \mathbb{Z}_n$, $\ell\ne \ell'$ such that $|Y\cap V(K_n^\ell)| \le \left\lceil\frac{k}{2}\right\rceil -1$ and $|Y\cap V(K_n^{\ell'})| \le \left\lceil\frac{k}{2}\right\rceil -1$.
Consider the vertices $(0,\ell)$ and $(0,\ell')$. Then
$$\Delta_{Y}((0,\ell), (0,\ell')) = |Y\cap V(K_n^\ell)| + |Y\cap V(K_n^{\ell'})| \le 2\left(\left\lceil\frac{k}{2}\right\rceil-1\right) = 2\left\lceil\frac{k}{2}\right\rceil -2\,,$$
which is not possible.

\medskip
The above considerations demonstrate that  $|Y|\le n\left\lceil\frac{k}{2}\right\rceil-2$ is not possible, hence   $\wdim_{k}(G) \ge n\left\lceil\frac{k}{2}\right\rceil-1$. We can conclude that $\wdim_{k}(G) = \left\lceil\frac{k}{2}\right\rceil-1$, when $5\le k\le 2n-1$ is odd.

\subsection{The case $k=3$}

We recall that in this subsection, we are going to show that
$$\wdim_3(K_n\cp K_n)=2n.$$
First, since $\wdim_4(G)=2n$ as proved in Subsection \ref{subsc:gen-case}, and because $\wdim_3(G)\le \wdim_4(G)$, we have that $\wdim_3(G)\le 2n$.

\medskip
Suppose now that $\wdim_3(G)\le 2n-1$, and let $Y$ be a weak $3$-metric basis. We shall show some structural properties of $Y$.

\smallskip
\noindent
\textbf{Claim:} Every layer of $G$ contains at least one vertex of $Y$.

\smallskip
\noindent
Indeed, suppose (WLOG) that $V(K_n^0)\cap Y=\emptyset$. Now, since for any vertex $(i,0)$, $i\ne 0$, it must hold that $\Delta_{Y}((0,0), (i,0))\ge 3$, we deduce that  $|V(K_n^i)\cap Y|\ge 3$. Thus, since this happens for each $i\ne 0$, we obtain that $|Y|\ge 3(n-1)>2n-1$ because $n\ge 3$.

\smallskip
We remark that this claim is satisfied for each layer, no matter if it is vertical or horizontal. Since $|Y|\le 2n -1$ by assumption, we have at least one horizontal layer $K_n^j$ with $|V(K_n^j)\cap Y|=1$ and at least one vertical layer $^iK_n$ with $|V(^iK_n)\cap Y|=1$.

Consider a vertex $(i,j)\notin Y$ such that $|V(^iK_n)\cap Y|=1$. If there are two horizontal layers $K_n^{j'}$ and $K_n^{j''}$, each having at most one vertex from $Y$, then the vertices $\Delta_Y((i,j'),(i,j''))\le 2$, which is not possible. This, together with the fact that $|Y|\le 2n-1$, and also with the claim above, imply that all but one horizontal layer have exactly two vertices of $Y$. Moreover, such remaining horizontal layer has exactly one vertex of $Y$. Let $K_n^{j^*}$ be such a layer. In addition, by a symmetric argument we obtain a parallel conclusion for vertical layers where, by our assumption, the layer $^iK_n$ is the one that has exactly one vertex of $Y$.  We have the following cases.

\medskip
\noindent
\textbf{Case 1:} $j^*=j$.\\
Let $(x,y)\in Y$ such that $x\ne i$ and $y\ne j$. We consider the vertices $(x,j),(i,y)$. Then, since $(i,j)$ and $(x,y)$ do not contribute to $\Delta_Y((x,j),(i,y))$, there must be at least three vertices of $Y$ lying in $V(^x K_n)\cup V(K_n^y)\setminus \{(x,y)\}$. This means that $^x K_n$ or $K_n^y$ contains at least three vertices of $Y$ (including the vertex $(x,y)$), which is a contradiction.

\medskip
\noindent
\textbf{Case 2:} $j^*\ne j$.\\
Let $(i^*,j^*)$ be the unique vertex of $Y$ in $K_n^{j^*}$. If $(i^*,j)\notin Y$, then by considering the vertices $(i^*,j)$ and $(i,j^*)$ and using a parallel argument as in the Case 1 above, we find one layer with at least three vertices of $Y$, which is again a contradiction.

Hence, we suppose that $(i^*,j)\in Y$. Consider then the vertices $(i^*,j)$ and $(i,j^*)$. Note that $(i^*,j^*)$ and $(i,j)$ do not contribute to $\Delta_Y((i^*,j),(i,j^*))$, and that $(i^*,j)$ contributes with $2$. Thus, there must be an additional vertex from $Y$ in the four layers containing $(i^*,j)$ and $(i,j^*)$. That is, in the set
$$(V(^{i^*}K_n)\cup V(^iK_n)\cup V(K_n^j)\cup V(K_n^{j^*}))\setminus \{(i^*,j),(i,j),(i^*,j^*)\}.$$ However, such a vertex from $Y$ does not exist by our assumptions, that are:
\begin{itemize}
    \item $Y\cap V(^iK_n)=\{(i,j)\}$,
    \item $Y\cap V(K_n^{j^*})=\{(i^*,j^*)\}$,
    \item $Y\cap V(^{i^*}K_n)=\{(i^*,j),(i^*,j^*)\}$, and
    \item $Y\cap V(K_n^j)=\{(i^*,j),(i,j)\}$.
\end{itemize}
This is a final contradiction, that shows that a weak $3$-resolving set of $G$ contains at least $2n$ vertices. This settles the case $k=3$.

\subsection{The case $k=2$}

Recall that in this subsection, our aim is to prove that
$$\wdim_2(K_n\cp K_n) = \left\lceil \frac{4n}{3}\right\rceil.$$

To this end, the following auxiliary graph will be useful. Let $Y\subseteq V(G)$. Then we define the graph $G_Y$ as follows. $G_Y$ is a bipartite graph with a bipartition $V_1 = \{0,1,\ldots, n-1\}$, $V_2 = \{0',1', \ldots, (n-1)'\}$, and the vertex $i\in V_1$ is adjacent to the vertex $j'\in V_2$ if $(i,j)\in Y$.

If $n\in \{3,4,5\}$, then we have checked the assertion of the theorem by using a computer, but it can also be checked by hand. Hence, assume in the following that $n\ge 6$.

We begin by constructing special weak $2$-resolving sets $Y_n$ of $G$ as follows. Let $n =3s + t$, $s \ge 2$, $t\in \mathbb{Z}_3$, and distinguish the following cases.
\begin{itemize}
\item If $t = 0$, then $Y_n = \{(3r,3r), (3r,3r+1), (3r+1,3r+2), (3r+2, 3r+2):\ r\in \mathbb{Z}_s\}$.
\item If $t = 1$, then $Y_n = Y_{n-1} \cup \{(n-1,n-2), (n-1,n-1)\}$.
\item If $t = 2$, then $Y_n = Y_{n-2} \cup \{(n-2,n-2), (n-2,n-1), (n-1,n-1)\}$.
\end{itemize}
See Fig.~\ref{fig:6-7-8} where the sets $Y_6$, $Y_7$, and $Y_8$ are schematically shown in  $K_6\cp K_6$, $K_7\cp K_7$, and $K_8\cp K_8$, respectively, and the edges are not drawn to make the construction clearer.

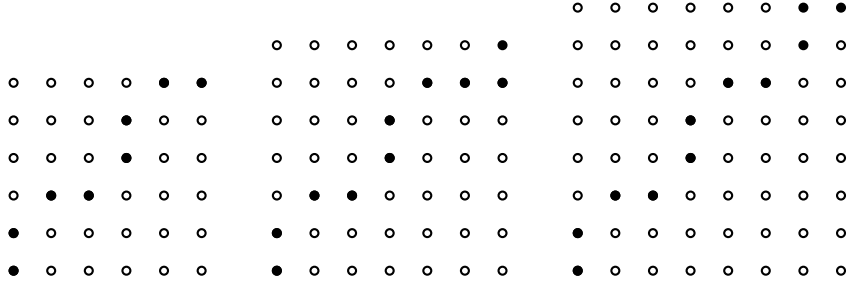
\begin{figure}[ht!]
\begin{center}
\begin{tikzpicture}[scale=0.5,style=thick,x=1cm,y=1cm]
\def\vr{3pt}
\begin{scope}[xshift=0cm, yshift=0cm] % line 0
\coordinate(x0) at (0.0,0.0);
\coordinate(x1) at (1,0);
\coordinate(x2) at (2,0);
\coordinate(x3) at (3,0);
\coordinate(x4) at (4,0);
\coordinate(x5) at (5,0);
\coordinate(y0) at (7,0.0);
\coordinate(y1) at (8,0);
\coordinate(y2) at (9,0);
\coordinate(y3) at (10,0);
\coordinate(y4) at (11,0);
\coordinate(y5) at (12,0);
\coordinate(y6) at (13,0);
\coordinate(z0) at (15,0.0);
\coordinate(z1) at (16,0);
\coordinate(z2) at (17,0);
\coordinate(z3) at (18,0);
\coordinate(z4) at (19,0);
\coordinate(z5) at (20,0);
\coordinate(z6) at (21,0);
\coordinate(z7) at (22,0);
% \edges		
%\draw (x5) -- (x6);
%  vertices
\foreach \i in {0,...,5}
{
\draw(x\i)[fill=white] circle(\vr);
}
\foreach \i in {0,...,6}
{
\draw(y\i)[fill=white] circle(\vr);
}
\foreach \i in {0,...,7}
{
\draw(z\i)[fill=white] circle(\vr);
}

\draw(x0)[fill=black] circle(\vr);
\draw(y0)[fill=black] circle(\vr);
\draw(z0)[fill=black] circle(\vr);
% text
%\node at (1,-1) {$G$};
\end{scope}
\begin{scope}[xshift=0cm, yshift=1cm] % line 1
\coordinate(x0) at (0.0,0.0);
\coordinate(x1) at (1,0);
\coordinate(x2) at (2,0);
\coordinate(x3) at (3,0);
\coordinate(x4) at (4,0);
\coordinate(x5) at (5,0);
\coordinate(y0) at (7,0.0);
\coordinate(y1) at (8,0);
\coordinate(y2) at (9,0);
\coordinate(y3) at (10,0);
\coordinate(y4) at (11,0);
\coordinate(y5) at (12,0);
\coordinate(y6) at (13,0);
\coordinate(z0) at (15,0.0);
\coordinate(z1) at (16,0);
\coordinate(z2) at (17,0);
\coordinate(z3) at (18,0);
\coordinate(z4) at (19,0);
\coordinate(z5) at (20,0);
\coordinate(z6) at (21,0);
\coordinate(z7) at (22,0);
% \edges		
%\draw (x5) -- (x6);
%  vertices
\foreach \i in {0,...,5}
{
\draw(x\i)[fill=white] circle(\vr);
}
\foreach \i in {0,...,6}
{
\draw(y\i)[fill=white] circle(\vr);
}
\foreach \i in {0,...,7}
{
\draw(z\i)[fill=white] circle(\vr);
}

\draw(x0)[fill=black] circle(\vr);
\draw(y0)[fill=black] circle(\vr);
\draw(z0)[fill=black] circle(\vr);
% text
%\node at (1,-1) {$G$};
\end{scope}
\begin{scope}[xshift=0cm, yshift=2cm] % line 2
\coordinate(x0) at (0.0,0.0);
\coordinate(x1) at (1,0);
\coordinate(x2) at (2,0);
\coordinate(x3) at (3,0);
\coordinate(x4) at (4,0);
\coordinate(x5) at (5,0);
\coordinate(y0) at (7,0.0);
\coordinate(y1) at (8,0);
\coordinate(y2) at (9,0);
\coordinate(y3) at (10,0);
\coordinate(y4) at (11,0);
\coordinate(y5) at (12,0);
\coordinate(y6) at (13,0);
\coordinate(z0) at (15,0.0);
\coordinate(z1) at (16,0);
\coordinate(z2) at (17,0);
\coordinate(z3) at (18,0);
\coordinate(z4) at (19,0);
\coordinate(z5) at (20,0);
\coordinate(z6) at (21,0);
\coordinate(z7) at (22,0);
% \edges		
%\draw (x5) -- (x6);
%  vertices
\foreach \i in {0,...,5}
{
\draw(x\i)[fill=white] circle(\vr);
}
\foreach \i in {0,...,6}
{
\draw(y\i)[fill=white] circle(\vr);
}
\foreach \i in {0,...,7}
{
\draw(z\i)[fill=white] circle(\vr);
}

\draw(x1)[fill=black] circle(\vr);
\draw(y1)[fill=black] circle(\vr);
\draw(z1)[fill=black] circle(\vr);
\draw(x2)[fill=black] circle(\vr);
\draw(y2)[fill=black] circle(\vr);
\draw(z2)[fill=black] circle(\vr);

% text
%\node at (1,-1) {$G$};
\end{scope}
\begin{scope}[xshift=0cm, yshift=3cm] % line 3
\coordinate(x0) at (0.0,0.0);
\coordinate(x1) at (1,0);
\coordinate(x2) at (2,0);
\coordinate(x3) at (3,0);
\coordinate(x4) at (4,0);
\coordinate(x5) at (5,0);
\coordinate(y0) at (7,0.0);
\coordinate(y1) at (8,0);
\coordinate(y2) at (9,0);
\coordinate(y3) at (10,0);
\coordinate(y4) at (11,0);
\coordinate(y5) at (12,0);
\coordinate(y6) at (13,0);
\coordinate(z0) at (15,0.0);
\coordinate(z1) at (16,0);
\coordinate(z2) at (17,0);
\coordinate(z3) at (18,0);
\coordinate(z4) at (19,0);
\coordinate(z5) at (20,0);
\coordinate(z6) at (21,0);
\coordinate(z7) at (22,0);
% \edges		
%\draw (x5) -- (x6);
%  vertices
\foreach \i in {0,...,5}
{
\draw(x\i)[fill=white] circle(\vr);
}
\foreach \i in {0,...,6}
{
\draw(y\i)[fill=white] circle(\vr);
}
\foreach \i in {0,...,7}
{
\draw(z\i)[fill=white] circle(\vr);
}

\draw(x3)[fill=black] circle(\vr);
\draw(y3)[fill=black] circle(\vr);
\draw(z3)[fill=black] circle(\vr);
% text
%\node at (1,-1) {$G$};
\end{scope}
\begin{scope}[xshift=0cm, yshift=4cm] % line 4
\coordinate(x0) at (0.0,0.0);
\coordinate(x1) at (1,0);
\coordinate(x2) at (2,0);
\coordinate(x3) at (3,0);
\coordinate(x4) at (4,0);
\coordinate(x5) at (5,0);
\coordinate(y0) at (7,0.0);
\coordinate(y1) at (8,0);
\coordinate(y2) at (9,0);
\coordinate(y3) at (10,0);
\coordinate(y4) at (11,0);
\coordinate(y5) at (12,0);
\coordinate(y6) at (13,0);
\coordinate(z0) at (15,0.0);
\coordinate(z1) at (16,0);
\coordinate(z2) at (17,0);
\coordinate(z3) at (18,0);
\coordinate(z4) at (19,0);
\coordinate(z5) at (20,0);
\coordinate(z6) at (21,0);
\coordinate(z7) at (22,0);
% \edges		
%\draw (x5) -- (x6);
%  vertices
\foreach \i in {0,...,5}
{
\draw(x\i)[fill=white] circle(\vr);
}
\foreach \i in {0,...,6}
{
\draw(y\i)[fill=white] circle(\vr);
}
\foreach \i in {0,...,7}
{
\draw(z\i)[fill=white] circle(\vr);
}
\draw(x3)[fill=black] circle(\vr);
\draw(y3)[fill=black] circle(\vr);
\draw(z3)[fill=black] circle(\vr);
% text
%\node at (1,-1) {$G$};
\end{scope}
\begin{scope}[xshift=0cm, yshift=5cm] % line 5
\coordinate(x0) at (0.0,0.0);
\coordinate(x1) at (1,0);
\coordinate(x2) at (2,0);
\coordinate(x3) at (3,0);
\coordinate(x4) at (4,0);
\coordinate(x5) at (5,0);
\coordinate(y0) at (7,0.0);
\coordinate(y1) at (8,0);
\coordinate(y2) at (9,0);
\coordinate(y3) at (10,0);
\coordinate(y4) at (11,0);
\coordinate(y5) at (12,0);
\coordinate(y6) at (13,0);
\coordinate(z0) at (15,0.0);
\coordinate(z1) at (16,0);
\coordinate(z2) at (17,0);
\coordinate(z3) at (18,0);
\coordinate(z4) at (19,0);
\coordinate(z5) at (20,0);
\coordinate(z6) at (21,0);
\coordinate(z7) at (22,0);
% \edges		
%\draw (x5) -- (x6);
%  vertices
\foreach \i in {0,...,5}
{
\draw(x\i)[fill=white] circle(\vr);
}
\foreach \i in {0,...,6}
{
\draw(y\i)[fill=white] circle(\vr);
}
\foreach \i in {0,...,7}
{
\draw(z\i)[fill=white] circle(\vr);
}

\draw(x4)[fill=black] circle(\vr);
\draw(y4)[fill=black] circle(\vr);
\draw(z4)[fill=black] circle(\vr);
\draw(x5)[fill=black] circle(\vr);
\draw(y5)[fill=black] circle(\vr);
\draw(z5)[fill=black] circle(\vr);

% text
%\node at (1,-1) {$G$};
\end{scope}
%
% extra line in n=7
\draw(7,6)[fill=white] circle(\vr);
\draw(8,6)[fill=white] circle(\vr);
\draw(9,6)[fill=white] circle(\vr);
\draw(10,6)[fill=white] circle(\vr);
\draw(11,6)[fill=white] circle(\vr);
\draw(12,6)[fill=white] circle(\vr);
\draw(13,6)[fill=black] circle(\vr);
\draw(13,5)[fill=black] circle(\vr);
% extra lines in n=8
\draw(15,6)[fill=white] circle(\vr);
\draw(16,6)[fill=white] circle(\vr);
\draw(17,6)[fill=white] circle(\vr);
\draw(18,6)[fill=white] circle(\vr);
\draw(19,6)[fill=white] circle(\vr);
\draw(20,6)[fill=white] circle(\vr);
\draw(21,6)[fill=black] circle(\vr);
\draw(22,6)[fill=white] circle(\vr);
\draw(15,7)[fill=white] circle(\vr);
\draw(16,7)[fill=white] circle(\vr);
\draw(17,7)[fill=white] circle(\vr);
\draw(18,7)[fill=white] circle(\vr);
\draw(19,7)[fill=white] circle(\vr);
\draw(20,7)[fill=white] circle(\vr);
\draw(21,7)[fill=black] circle(\vr);
\draw(22,7)[fill=black] circle(\vr);
\end{tikzpicture}
\caption{The sets $Y_6$, $Y_7$, and $Y_8$ respectively in $K_6\cp K_6$, $K_7\cp K_7$, and $K_8\cp K_8$}
\label{fig:6-7-8}
\end{center}
\end{figure}

We claim that $Y_n$ is a weak $2$-resolving set of $G$.
To this end, consider first two vertices with the same first coordinate, say $(i,j)$ and $(i,j')$.
Then each of the layers $K_n^j$ and $K_n^{j'}$ contains at least one vertex of $Y_n$ which already implies that $\Delta_{Y_n}((i,j), (i,j')) \ge 2$.
Analogously, we see that $\Delta_{Y_n}((i,j), (i',j)) \ge 2$ for any $j$ and any $i\ne i'$. Consider next vertices $(i,j)$ and $(i',j')$, where $i\ne i'$ and $j\ne j'$. Setting
$$Y_n(i,i',j,j') = Y_n \cap (V(K_n^{j}) \cup V(K_n^{j'}) \cup V(^{i}K_n) \cup V(^{i'}K_n))$$
we infer that $|Y_n(i,i',j,j')| \ge 4$. Since each vertex from
$$Y_n(i,i',j,j')\setminus \{(i,j'), (i',j)\}$$
contributes to $\Delta_{Y_n}((i,j), (i',j'))$, we infer that also now we have $\Delta_{Y_n}((i,j), (i',j')) \ge 2$.

We have thus proved that $Y_n$ is a weak $2$-resolving set of $G$. As $|Y_n| = \left\lceil \frac{4n}{3}\right\rceil$, we conclude that $\wdim_2(K_n\cp K_n) \le \left\lceil \frac{4n}{3}\right\rceil$.

\medskip
To complete the proof we need to demonstrate that $\wdim_2(K_n\cp K_n) \ge \left\lceil \frac{4n}{3}\right\rceil$. For this sake let $Y$ be an arbitrary weak $2$-metric basis of $G$ and consider the associated graph $G_Y$.

We first claim that $G_Y$ has no isolated vertices. Suppose on the contrary that, without loss of generality, the vertex $0\in V_1$ is isolated in $G_Y$. Then $Y\cap V(K_n^0) = \emptyset$ and $Y\cap V(^0K_n) = \emptyset$. Considering the vertices $(0,0)$ and $(0,j)$, where $j \in [n-1]$, we deduce that $|Y\cap V(K_n^j)| \ge 2$. As this holds for each such $j$, and since we have assumed that $n\ge 6$, we get $|Y| \ge 2(n-1) > \left\lceil \frac{4n}{3}\right\rceil$, a contradiction.

We next claim that no component of $G_Y$ is isomorphic to $K_2$ as soon as $n\ge 6$. Suppose on the contrary that this is the case and let, without loss of generality, the edge $00'$ induces a component of $G_Y$ isomorphic to $K_2$. This means that $(0,0)\in Y$ and that $Y\cap V(K_n^0) = \{(0,0)\}$ and $Y\cap V(^0K_n) = \{(0,0)\}$. Consider now an arbitrary edge of $G_Y$ different from $00'$. By the symmetry of $G$ we may without loss of generality assume that this additional edge is $11'$ (so that $(1,1)\in Y$). Consider now the vertices $(1,0)$ and $(0,1)$. Then $|Y\cap (V(K_n^{0}) \cup V(K_n^{1}) \cup V(^{0}K_n) \cup V(^{1}K_n))|\ge 4$. Because $Y\cap V(K_n^0) = \{(0,0)\}$ and $Y\cap V(^0K_n) = \{(0,0)\}$ it follows that the layers $K_n^{1}$ and $^{1}K_n$ together contain at least two vertices from $Y$ different from $(1,1)$. This in turn implies that $\deg_{G_Y}(1) + \deg_{G_Y}(1') \ge 4$. Therefore, the component of $G_Y$ containing the edge $11'$ has cardinality at least $4$. Let $q$ be the number of components of $G_Y$ different from the unique $K_2$ component, and let $n_i$, $i\in q$, be their cardinalities. As we have just proved, $n_i\ge 4$ holds for $i\in [q]$. Consequently, $q\le \left\lfloor \frac{2n-2}{4} \right\rfloor = \left\lfloor \frac{n-1}{2} \right\rfloor$. Since $2n = 2 + \sum_{i=1}^q n_i$, we can now estimate as follows:
\begin{align*}
|E(G_Y)| & \ge 1 + \sum_{i=1}^q(n_i-1) = 1 + (2n-2) - q\\
& \ge 2n - 1 - \left\lfloor \frac{n-1}{2} \right\rfloor\,.
\end{align*}
Since for $n\ge 6$ we have
$$|E(G_Y)| \ge 2n - 1 - \left\lfloor \frac{n-1}{2} \right\rfloor >
\left\lceil \frac{4n}{3} \right\rceil\,,$$
we can conclude that in this case $Y$ is not a weak $2$-metric basis.

Let now $n\ge 6$. Then by the above, each component of $G_Y$ is of cardinality at least $3$. Denoting the number of components  of $G_Y$ by $q$, and their respective cardinalities by $n_i$, $i\in [q]$, we have
$2n = \sum_{i=1}^q n_i$ and $q\le \left\lfloor \frac{2n}{3} \right\rfloor$. From these two estimates we can deduce that
\begin{align*}
|E(G_Y)| & \ge \sum_{i=1}^q(n_i-1) = 2n- q\\
& \ge 2n - \left\lfloor \frac{2n}{3} \right\rfloor \\
& = \left\lceil \frac{4n}{3} \right\rceil\,.
\end{align*}
Since $|E(G_Y)| = |Y|$, we can conclude that $\wdim_2(G) \ge \left\lceil \frac{4n}{3}\right\rceil$ which completes the formula for $k=2$.

\medskip
Once we have dealt with all the possible cases for $k$, the proof of Theorem \ref{thm:wdim-Kn-Kn-all} is completed.

%%%%%%%%%%%%%%%%%%%%%%%%%%%%%
%\section{ILP formulation and its simplification}\label{sec:ILP}
\section{ILP formulations for determining  $\wdim_{k}(G)$}\label{sec:ILP}
%%%%%%%%%%%%%%%%%%%%%%%%%%%%%

The problem of finding a weak $k$-metric basis in a graph $G$ can be stated as an integer linear programming problem with binary variables. The formulation of \cite{Peterin} associates every vertex set $S$ with a set of binary decision variables defined as follows:
$$s_u=\begin{cases}1; & u\in S\\ 0;& \text{otherwise}.\end{cases}$$
The formulation of~\cite{Peterin} is:
\begin{subequations}
\begin{align}
F_s\qquad \min\quad & \sum_{u\in V}s_u& &&\label{eq:ei:fo} \\
s.t. & \sum_{w\in V(G)}\left|d_G\left(u, w\right)-d_G\left(v, w\right)\right|s_w\geq k \qquad && u, v\in V(G), u<v \label{const}\\
s_u & \in\{0, 1\}, \forall u\in V(G).\label{domain}
\end{align}
\end{subequations}
Here we have assumed that the vertices of $G$ are linearly ordered by relation $<$. The number of constraints of $F_S$ is $\mathcal O(m^2\times n^2)$, which can be quite high even for instances with moderate values of $n$ and $m$.\\
For $G=K_{n}\cp K_{m}$, the coefficients $\left|d_G\left(u, w\right)-d_G\left(v, w\right)\right|$ can be precomputed for every triplet $u, v, w\in V$, with $u<v$. Let us use the notation $a_{uvw}=\left|d_G\left(u, w\right)-d_G\left(v, w\right)\right|$, and say that two vertices are \emph{aligned} if they agree in exactly one component, that is, if they belong to the same layer. Then,
$$a_{uvw}=\begin{cases}2; & \text{if $u$ and $v$ are not aligned, and either $w=u$ or $w=v$,}\\
%1; & \text{if  $u$ and $v$ are not aligned and exactly one of the vertices $u$ or $v$ is aligned with $w$},\\
1; & \text{if $u$ and $v$ are aligned and either $w=u$ or $w=v$},\\
1; & \text{if  $w\notin\{u,v\}$, and exactly one of the vertices $u$ or $v$ is aligned with $w$},\\ 0; & \text{otherwise.}
\end{cases}$$

Note that the only coefficients with value 2 appear in the Constraints \eqref{const} associated with pairs of vertices $u, v$ that are not aligned, for the indices $w\in\{u, v\}$, when $a_{uvu}=a_{uvv}=2$. The other non-zero coefficients appear when $w$ is aligned with exactly one of the vertices $u$ or $v$, and it is possible (but not necessary) that $u$ and $v$ are aligned. For ease of presentation, for a given vertex pair $u, v$, $v>u$, we will use the notation $I_{uv}$ to denote the index set of the vertices $w$ such that $a_{uvw}=1$, that is,
$$I_{uv}=\{w\in V:\ w \text{ is aligned with exactly one of the vertices}\ u\ \text{or}\ v\}\,.$$
Then, the set of constraints \eqref{const} can be rewritten as:

\begin{subequations}
\begin{align}
      & s_u + s_v + \sum_{w\in I_{uv}}s_w\geq k \qquad && u, v\in V(G),\, u<v \text{ aligned,} \label{const:align}\\
     & 2 s_u + 2 s_v + \sum_{w\in I_{uv}}s_w\geq k \qquad && u, v\in V(G), \, u<v \text{ not aligned.} \label{const:noalign}
\end{align}
\end{subequations}

Observe that the set of constraints \eqref{const:align} is $\mathcal O(m\times n)$ whereas the set of constraints \eqref{const:noalign} remains $\mathcal O(m^2\times n^2)$.\\

Next we see that for values of $k\geq 4$ the set of Constraints \eqref{const:noalign} associated with non-aligned pairs of vertices are not needed, as they are implied by those associated with aligned vertices. This will allow us to ignore this set of constraints and work with a formulation having only $\mathcal O(m\times n)$ constraints. The formulation in which the set of constraints \eqref{const} is substituted by \eqref{const:align} will be referred to as $F_S^-$.

We first observe that  for any pair of non-aligned vertices $u, v\in V(G)$ there exist exactly two vertices $\tilde u, \tilde v\in V$ that are aligned with both $u$ and $v$.

\begin{proposition}
If $k\geq 4$, then the set of Constraints \eqref{const:noalign} associated with non-aligned pairs are implied by the set of Constraints \eqref{const:align} associated with pairs of aligned vertices.
\end{proposition}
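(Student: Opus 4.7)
My plan is to bound the left-hand side of \eqref{const:noalign} from below by combining exactly two aligned constraints from \eqref{const:align}. Given a non-aligned pair $u=(u_1,u_2)$ and $v=(v_1,v_2)$, the two vertices aligned with both $u$ and $v$ are $\tilde u=(u_1,v_2)$ and $\tilde v=(v_1,u_2)$; together with $u$ and $v$ they form the four corners of a rectangle whose sides are the layers ${}^{u_1}K_m$, ${}^{v_1}K_m$, $K_n^{u_2}$, $K_n^{v_2}$. I would use the two aligned pairs $(u,\tilde u)$, sharing the vertical layer ${}^{u_1}K_m$, and $(u,\tilde v)$, sharing the horizontal layer $K_n^{u_2}$.

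For the pair $(u,\tilde u)$ the description of $I_{u\tilde u}$ is clean: every vertex of the common vertical layer is aligned with both endpoints and so contributes $0$, while the vertices of $K_n^{u_2}\setminus\{u\}$ are aligned only with $u$ and those of $K_n^{v_2}\setminus\{\tilde u\}$ only with $\tilde u$. Absorbing the $s_u+s_{\tilde u}$ terms into these two sums, the constraint \eqref{const:align} for $(u,\tilde u)$ collapses to $\sum_{w\in K_n^{u_2}}s_w+\sum_{w\in K_n^{v_2}}s_w\ge k$; a symmetric calculation for $(u,\tilde v)$ yields $\sum_{w\in {}^{u_1}K_m}s_w+\sum_{w\in {}^{v_1}K_m}s_w\ge k$.

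Next, I would expand the left-hand side of \eqref{const:noalign} in terms of the same four layer sums. Because $\tilde u$ and $\tilde v$ are the only vertices aligned with both $u$ and $v$, one has $I_{uv}=({}^{u_1}K_m\cup {}^{v_1}K_m\cup K_n^{u_2}\cup K_n^{v_2})\setminus\{u,v,\tilde u,\tilde v\}$. A short inclusion-exclusion (the pairwise intersections of the four layers are exactly the four corners $u,v,\tilde u,\tilde v$) then gives
$$2s_u+2s_v+\sum_{w\in I_{uv}}s_w=\sum_{w\in {}^{u_1}K_m}s_w+\sum_{w\in {}^{v_1}K_m}s_w+\sum_{w\in K_n^{u_2}}s_w+\sum_{w\in K_n^{v_2}}s_w-2s_{\tilde u}-2s_{\tilde v}.$$
Substituting the two aligned bounds into the right-hand side shows that the left-hand side is at least $2k-2(s_{\tilde u}+s_{\tilde v})\ge 2k-4$, which is $\ge k$ precisely when $k\ge 4$.

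The only delicate point is the inclusion-exclusion identity for $2s_u+2s_v+\sum_{w\in I_{uv}}s_w$; after that the implication is immediate, and the threshold $k\ge 4$ is visibly tight, since at $k=3$ the slack $2k-4=2<3$ lets the argument fail.
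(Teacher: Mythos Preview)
Your proof is correct and follows essentially the same approach as the paper: both arguments use the two aligned constraints for the pairs $(u,\tilde u)$ and $(u,\tilde v)$, combine them, and arrive at the bound $2k-4\ge k$. The only difference is bookkeeping---the paper decomposes $I_{uv}$ directly in terms of $I_{u\tilde u}$ and $I_{u\tilde v}$, whereas you pass through the four layer sums; your identity $2s_u+2s_v+\sum_{w\in I_{uv}}s_w=h_{u_2}+h_{v_2}+g_{u_1}+g_{v_1}-2s_{\tilde u}-2s_{\tilde v}$ is exactly the reformulation the paper establishes in the proposition immediately following this one.
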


\begin{proof}
Let us assume that $k\geq 4$ and let $\widehat s$ be a binary solution satisfying Constraints \eqref{const:align}. Consider a given pair of non-aligned vertices $u, v\in V(G)$, $u<v$, and let us see that the associated Constraint \eqref{const:noalign} is also satisfied by $\widehat s$.\\

Let $\tilde u, \tilde v\in V(G)$ be the two vertices  that are aligned with both $u$ and $v$.
Indeed, $$I_{uv}=\left(I_{u\tilde v}\setminus \{\tilde u, v\}\right)\cup\left(I_{u\tilde u}\setminus\{\tilde v, v\}\right)\,,$$ and the subsets $\left(I_{u\tilde v}\setminus \{\tilde u, v\}\right)$ and $\left(I_{u\tilde u}\setminus\{\tilde v, v\}\right)$ are disjoint (see Fig.~\ref{fig:coefs}).

\begin{figure}[ht]
    \centering
      %\begin{subfigure}[b]{0.3\textwidth}
   \begin{subfigure}%{0.3\textwidth}
         \centering
         \includegraphics[width=0.32\textwidth]{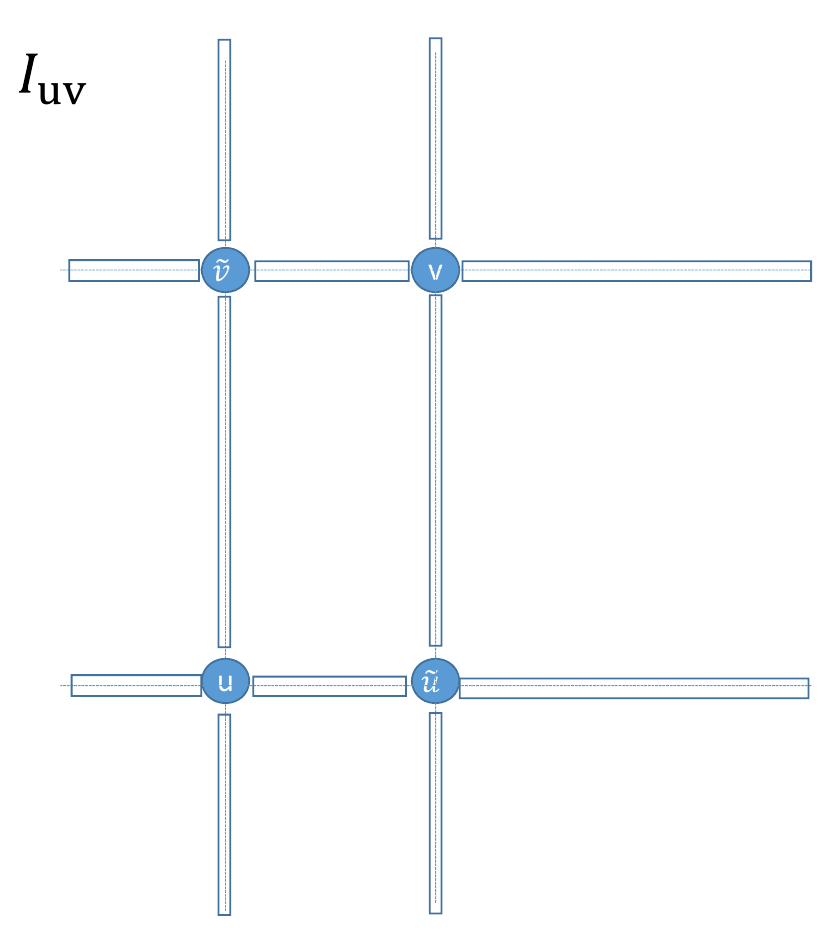}
         %\caption{Transformed graph $G_T$}
         \label{fig:RPPtransformedgraph}
    \end{subfigure}
    \begin{subfigure}%{0.3\textwidth}
         \centering
         \includegraphics[width=0.32\textwidth]{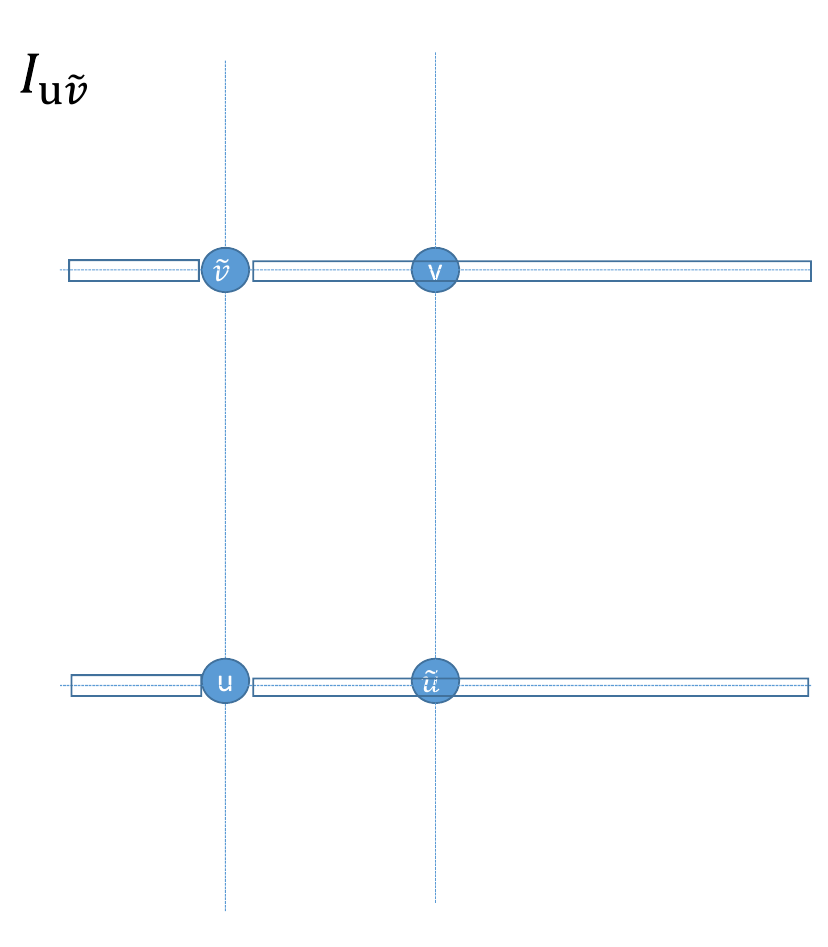}
         %\caption{An infeasible solution}
         \label{fig:RPPinfeasiblesolution}
    \end{subfigure}
    \begin{subfigure}%{0.3\textwidth}
         \centering
         \includegraphics[width=0.32\textwidth]{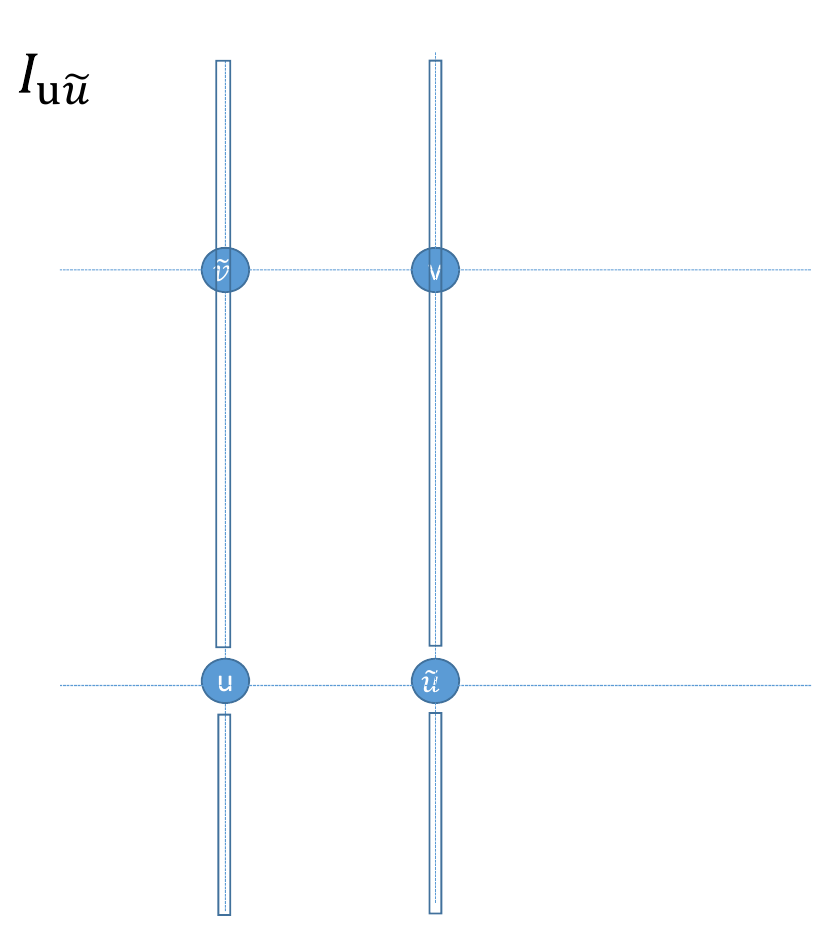}
         %\caption{Multigraph $(V, \overline E)$}
         \label{fig:RPPmultigraph}
    \end{subfigure}
        \caption{Definition of sets $I_{uv}$, $I_{u\tilde v}$, and $I_{u\tilde u}$}
        \label{fig:coefs}
\end{figure}

Hence, $$\sum_{w\in I_{uv}}\widehat s_w= \sum_{w\in I_{u\tilde v}}\widehat s_w+\sum_{w\in I_{u\tilde u}}\widehat s_w- 2 \widehat s_v-\widehat s_{\tilde u}-\widehat s_{\tilde v}.$$

Therefore
\begin{align*}
2 \widehat s_u + 2 \widehat s_{v} + \sum_{w\in I_{u v}}\widehat s_w &  = 2 \widehat s_u + 2 \widehat s_{v} +\left(\sum_{w\in I_{u\tilde v}}\widehat s_w-\widehat s_{\tilde u}-\widehat s_{v}\right)+\left(\sum_{w\in I_{u\tilde u}}\widehat s_w-\widehat s_v-\widehat s_{\tilde v}\right)\\
& = 2 \widehat s_u-  \widehat s_{\tilde u}- \widehat s_{\tilde v}+\sum_{w\in I_{u\tilde v}}\widehat s_w+\sum_{w\in I_{u\tilde u}}\widehat s_w\,.
 %\sum_{w\in I_{u\tilde u}}\widehat s_w  \geq 2 \widehat s_u + \widehat s_{\tilde u} + \widehat s_{\tilde v}+ \sum_{w\in I_{u\tilde v}}\widehat s_w+ \sum_{w\in I_{u\tilde u}}\widehat s_w\geq 2 k \qquad $$
\end{align*}

Since the constraints \eqref{const:align} associated with the pair $u$, $\tilde v$, and with the pair $u$, $\tilde u$ are both satisfied by $\widehat s$, we have:

\begin{align}
      & \widehat s_{u} + \widehat s_{\tilde v} + \sum_{w\in I_{u\tilde v}}\widehat s_w\geq k \quad && \Rightarrow \quad \widehat s_{u} +  \sum_{w\in I_{u\tilde v}}\widehat s_w\geq k-1 \nonumber \\
      & \widehat s_u + \widehat s_{\tilde u} + \sum_{w\in I_{u \tilde u}}\widehat s_w\geq k \quad && \Rightarrow \quad \widehat s_{u} +  \sum_{w\in I_{u \tilde u}}\widehat s_w\geq k-1. \nonumber
\end{align}

Therefore
$$ 2 \widehat s_u + 2 \widehat s_{v} + \sum_{w\in I_{u v}}\widehat s_w= 2 \widehat s_u +  \sum_{w\in I_{u\tilde v}}\widehat s_w+ \sum_{w\in I_{u\tilde u}}\widehat s_w  -  \widehat s_{\tilde u}- \widehat s_{\tilde v}  \geq 2 \left(k-1\right)-2\geq k \Leftrightarrow k\geq 4.$$
\end{proof}

Preliminary computational testing showed that formulation  $F_s$ can be quite time consuming. For fixed values of $n$ and $m$, it was especially  time consuming for small values of $k\in\{2, 3\}$.
As could be expected, for instances with the same values of $n$, $m$, and $k\geq 4$, formulation $F_s^-$ outperformed $F_s$.
Still, despite the reduction in the number of constraints, formulation $F_S^-$ for $k\geq 4$ can also be quite time consuming.
Both formulations usually produce optimal or near-optimal solutions in very small computing times, although proving the optimality of such solutions may take quite high computing times.
This is particularly true for instances with odd values of the parameter $k$. For example, for $K_5\cp K_m$ with $2\le k\le 10$, all instances with $k$ even can be solved to proven optimality within a time limit of 600 seconds, whereas with $k$ odd no instance can be solved to proven optimality, even if the time limit is increased to 7,200 seconds.
For this reason we next develop an alternative formulation, which computationally performs notably better, even if it requires more decision  variables than formulations $F_S$ and $F_S^-$.\\ %\eqref{eq:ei:fo} -\eqref{domain}.\\

In addition to the original decision variables $s_u$, $u\in V(G)$ we use additional decision variables to denote the number of elements of $S$ in each horizontal and vertical layer, namely:

\begin{itemize}
\item $h_j=\text{Number of elements of $S$ in the horizontal layer }K_n^j,\, j\in V(K_m)$.
\item $g_i=\text{Number of elements of $S$ in the vertical layer } ^iK_m,\ i\in V(K_n)$.
\end{itemize}
%In order to relate variables $\mathbf{g}$ and $\mathbf{h}$ with the original variables $\mathbf{s}$, we use the following set of auxiliary binary variables, which are defined forall $1\i\leq n$, $1\leq j\leq m$:

%$$z_{ij}=\begin{cases}1; & \text{the vertex of coordinates } (i, j)\in S\\ 0;& \text{otherwise}.\end{cases}$$

Indeed, we have
\begin{subequations}
\begin{align}
& h_j = \sum_{u\in V(K_n^j)}s_u \qquad\qquad && j\in V(K_m)\nonumber\\
& g_i = \sum_{u\in V(^iK_m)}s_u \qquad\qquad && i\in V(K_n).\nonumber
\end{align}
\end{subequations}

In the following, for a given pair of vertices $u, v\in V(G)$, we make explicit their coordinates with the notation  $u=(i_u, j_u)$,  $v=(i_v, j_v$). Moreover, when $u$ and $v$ are not aligned we will also use the notation  $\tilde u=(i_u, j_v)$ and $\tilde v=(i_v, j_u)$.

\begin{proposition}
The set of constraints \eqref{const:align}-\eqref{const:noalign} can be expressed in terms of the variables $\mathbf{h}$ and $\mathbf{g}$ as:
\begin{subequations}
\begin{align}
      & g_{i_u}+g_{i_v}\geq k && u,\, v\in V(G),\, u<v \text{ aligned horizontally}\label{const-aligh-g}\\
      & h_{j_u}+h_{j_v}\geq k && u,\, v\in V(G),\, u<v  \text{ aligned vertically}\label{const-aligh-h}\\
      & h_{j_u}+h_{j_v}+g_{i_u}+g_{i_v}- 2s_{\tilde u}-2s_{\tilde v}\geq k \quad && u,\, v\in V(G),\, u<v  \text{ not aligned.}\label{const-notaligh2}
\end{align}
\end{subequations}
\end{proposition}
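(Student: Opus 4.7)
The plan is a direct algebraic verification of each of the three identities, handling the three types of pairs separately and exploiting the definitions of $h_j$ and $g_i$.

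Suppose first that $u$ and $v$ are aligned horizontally, so $j_u=j_v$ and $i_u\ne i_v$. I would show that $I_{uv}$ consists exactly of the vertices, other than $u$ and $v$ themselves, of the two vertical layers ${}^{i_u}K_m$ and ${}^{i_v}K_m$. Indeed, a vertex $w=(i_w,j_w)\notin\{u,v\}$ is aligned with exactly one of $u,v$ iff it shares the $i$-coordinate with one and disagrees with both in the $j$-coordinate: sharing the common $j$-coordinate $j_u=j_v$ would make $w$ aligned with both, while sharing no coordinate makes it aligned with neither. Hence
\begin{equation*}
s_u + s_v + \sum_{w\in I_{uv}} s_w \;=\; \sum_{w\in V({}^{i_u}K_m)} s_w + \sum_{w\in V({}^{i_v}K_m)} s_w \;=\; g_{i_u}+g_{i_v},
\end{equation*}
which converts Constraint \eqref{const:align} into \eqref{const-aligh-g}. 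The vertical case \eqref{const-aligh-h} follows by the symmetric argument, interchanging the roles of rows and columns.

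Next suppose $u$ and $v$ are not aligned, i.e.\ $i_u\ne i_v$ and $j_u\ne j_v$. A case analysis on which coordinates a candidate vertex $w$ shares with $u$ and with $v$ shows that exactly two vertices are aligned with both $u$ and $v$, namely $\tilde u=(i_u,j_v)$ and $\tilde v=(i_v,j_u)$, and that $I_{uv}$ equals the union of the four layers $K_n^{j_u}$, $K_n^{j_v}$, ${}^{i_u}K_m$, ${}^{i_v}K_m$ with the four ``corners'' $u,v,\tilde u,\tilde v$ removed. Each corner lies in exactly two of those four layers, so summing the four counts yields
\begin{equation*}
h_{j_u}+h_{j_v}+g_{i_u}+g_{i_v} \;=\; 2s_u+2s_v+2s_{\tilde u}+2s_{\tilde v}+\sum_{w\in I_{uv}} s_w.
\end{equation*}
Rearranging gives $2s_u+2s_v+\sum_{w\in I_{uv}}s_w = h_{j_u}+h_{j_v}+g_{i_u}+g_{i_v}-2s_{\tilde u}-2s_{\tilde v}$, which is exactly the conversion of \eqref{const:noalign} into \eqref{const-notaligh2}.

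The only obstacle is the bookkeeping: one must keep track of the fact that the four corner vertices $u,v,\tilde u,\tilde v$ are each counted twice when the four layer sums are added, and that they must be excluded from $I_{uv}$ (since $\tilde u$ and $\tilde v$ are aligned with \emph{both} $u$ and $v$, hence not with ``exactly one''). Once this is laid out carefully in each subcase, the three identities follow immediately from the definitions of the new variables $h_j$ and $g_i$.
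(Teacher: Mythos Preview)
Your proposal is correct and follows essentially the same approach as the paper's proof: a direct case analysis that identifies $I_{uv}$ explicitly for each type of pair and then rewrites the left-hand side of \eqref{const:align} or \eqref{const:noalign} as the appropriate combination of layer sums. The only cosmetic difference is that in the non-aligned case you phrase the computation as a clean double-counting of the four corner vertices, whereas the paper describes the same identity more verbally; the content is identical.
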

\begin{proof}
Let $u, v\in V(G)$, $u<v$, with $u=(i_u, j_u)$, $v=(i_v, j_v)$, and consider the following cases:

\medskip\noindent
\textbf{Case 1:} $u,\ v$ are aligned.\\
%for every pair of aligned vertices $u, v\in V(G)$, $u<v$,
For constraints \eqref{const:align} we distinguish the following subcases:
\begin{itemize}
\item $u, v\in {V(^iK_m)}$ for some $i\in V(K_n)$. %$1\leq i\leq n$.
Then, $a_{uvw}=0$  for all $w\in {V(^iK_m)}\setminus \{u, v\}$, i.e.,
$V(^iK_m)\cap  I_{uv}=\emptyset$,  so $V(K_n^{j_u})\cup  V(K_n^{j_v})=\{u, v\}\cup  I_{uv}$.\\ %, where $j_u$, $j_v$ are the indices of the vertical layers of $u$ and $v$, respectively.\\
Since $V(K_n^{j_u})\cap  V(K_n^{j_v})=\emptyset$, we have that $s_u+s_v+\sum_{w\in I_{uv}}s_w=h_{j_u}+h_{j_v}$ so  the constraint \eqref{const:align} associated with the pair $u,\,v$ can be rewritten as:
$$h_{j_u}+h_{j_v}\geq k.$$
\item $u, v\in V(K_n^j)$ for some $j\in V(K_m)$. %$1\leq j\leq m$.
Now,  $a_{uvw}=0$  for all $w\in V(K_n^j)\setminus \{u, v\}$, i.e.,
$V(K_n^j)\cap  I_{uv}=\emptyset$,  so $V({^{i_u}K_m}) \cup V({^{i_v}K_m}) =\{u, v\}\cup  I_{uv}$.\\ %, where $i_u$, $i_v$ are the indices of the horizontal layers of $u$ and $v$, respectively.\\
Again, $V({^{i_u}K_m}) \cap V({^{i_v}K_m})=\emptyset$ and we have that $s_u+s_v+\sum_{w\in I_{uv}}s_w= g_{i_u}+g_{i_v}$ so  the constraint \eqref{const:align} associated with the pair $u,\,v$ can be rewritten as:
$$g_{i_u}+g_{i_v}\geq k.$$
\end{itemize}
\noindent
\textbf{Case 2:} $u,\ v$ are not aligned.\\
In this case, a similar analysis can be applied to the constraints \eqref{const:noalign}, which are needed for $k<4$. Now, with two exceptions, the vertices of $I_{uv}$ are those of the vertical layers ${^{i_u}}K_m$ and ${^{i_v}}K_m$
 and the horizontal layers $K_n^{j_u}$ and $K_n^{j_v}$. The exceptions are the vertices $\tilde u=(i_u, j_v)$ and $\tilde v=(i_v, j_u)$, which appear in $V({K_n^{j_v}})\cap V({^{i_u}}K_m)$ and $V({K_n^{j_u}}) \cap V({^{i_v}}K_m)$, respectively. These two vertices are at distance $1$ from both $u$ and $v$ so they should not appear in the constraint.
Moreover, the two vertices that appear with coefficient 2 in the constraint \eqref{const:noalign} are precisely  vertex $u=(i_u, j_u)\in V({K_n^{j_u}})\cap V({^{i_u}K_m})$ and the vertex $v=(i_v, j_v)\in V({K_n^{j_v}}) \cap V({^{i_v}K_m})$.
Therefore, $h_{j_u}+h_{j_v}+g_{i_u}+g_{i_v}- 2 s_{\tilde u}-2 s_{\tilde v}= 2 s_u+2s_v+\sum_{w\in I_{uv}} s_w$, so the constraint \eqref{const:noalign} associated with the pair $u,\,v$ can be rewritten as:
    $$h_{j_u}+h_{j_v}+g_{i_u}+g_{i_v}- 2 s_{\tilde u}-2 s_{\tilde v}\geq k,$$
which completes our proof.
\end{proof}

Furthermore, we observe that the sets of constraints \eqref{const-aligh-g} and \eqref{const-aligh-h} can be reduced notably. In particular, let $u, v\in V(G)$ and $u', v'\in V(G)$ be two pairs of horizontally aligned vertices in the same two vertical layers, i.e.
$(i)$\, $j_u=j_v$ and $j_{u'}=j_{v'}$;  $(ii)$\, $u,u'\in V(^{i}K_m)$ for some $i\in V(K_n)$ and $v,v'\in V(^{i'}K_m)$ for some $i'\in V(K_n)$ with $i\ne i'$.
Then, the constraints \eqref{const-aligh-g} associated with the pair $u,\, v$ and with the pair $u',\, v'$ are exactly the same. This means that the set of constraints \eqref{const-aligh-g} reduces to only single constraint for every pair of vertical layers $i, i'\in  V(K_n)$, $i\ne i'$. \\
%Furthermore, we observe that the sets of constraints \eqref{const-aligh-g} and \eqref{const-aligh-h} can be reduced notably. In particular, let $u, v\in V(G)$ and $u', v'\in V(G)$ two pairs of horizontally aligned vertices in the same two vertical layers, i.e. {\color{blue}$i_{u}=i_{v}$, $i_{u'}=i_{v'}$ with $j_u\ne j_{u'}$ and $i_{u'}=i_{u}$, $i_{v'}=i_{v}$}. Then, the constraints \eqref{const-aligh-g} associated with the pair $u,\, v$ and with the pair $u',\, v'$ are exactly the same. This means that the set of constraints \eqref{const-aligh-g} reduces to only single constraint for every pair of vertical layers $j, j'$, $j\ne j'$. \\

Similarly, the set of constraints \eqref{const-aligh-h} reduces to only single constraint for every pair of horizontal layers $j, j'\in  V(K_m)$, $j\ne j'$. \\

This observation can be summarized in the result below:

\begin{corollary}
\label{coro:last-ILP}
The following integer linear programming formulation produces a weak $k$-metric basis in a graph $G$:

\begin{subequations}
\begin{align}
F_{gh}\qquad \min\quad & \sum_{u\in V(G)}s_u &&\label{eq:ei:fo2} \\
s.t.\quad  & h_j = \sum_{u\in V(K_n^j)}s_u \qquad\qquad && j\in V(K_m)\label{const-h}\\
      & g_i = \sum_{u\in V(^iK_m)}s_u \qquad\qquad && i\in V(K_n)\label{const-g}\\
      & h_{j}+h_{j'}\geq k && j, j'\in V(K_m),\, j < j'\label{const-aligh-hg}\\
      & g_{i}+g_{i'}\geq k && i, i'\in V(K_n),\, i < i'\label{const-aligh-hh}\\      & h_{j_u}+h_{j_v}+g_{i_u}+g_{i_v}- 2s_{\tilde u}-2s_{\tilde v}\geq k \quad && u,\, v\in V(G),\, u<v  \text{ not aligned}\label{const-notaligh22}\\
s_u & \in\{0, 1\}, \forall u\in V(G)\label{domain2}\\
h_j,\, & g_i \text{ integer, } \forall j\in V(K_m), i\in V(K_n).\label{domain22}
\end{align}
\end{subequations}

Formulation $F_{gh}$ has $m$ constraints \eqref{const-h} and $n$ constraints \eqref{const-g}. The number of constraints of each of the sets \eqref{const-aligh-hg} and \eqref{const-aligh-hh} is $\mathcal O(m^2)$ and $\mathcal O(n^2)$, respectively. Finally, it has $\mathcal O(n^2m^2)$ constraints \eqref{const-notaligh22}, which, as in formulation $F_s$, are only needed when $k\leq 3$.\\
%As in the previous formulation, the constraints \eqref{const-notaligh22} are only needed when $k\leq 3$.\\

Note that for $k\geq 4$ $F_{gh}$ admits the following (simple) interpretation: every pair of vertical layers must contain at least $k$ elements of $S$ and every pair of horizontal layers must contain at least $k$ elements of $S$.
\end{corollary}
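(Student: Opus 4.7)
The plan is to assemble $F_{gh}$ as a compact reformulation of $F_s$ by combining the two preceding propositions with the layer-pair symmetry observed just before the corollary.

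First, I would invoke the correctness of $F_s$ from~\cite{Peterin} and recall that the first proposition of this section rewrites \eqref{const} into the two families \eqref{const:align} and \eqref{const:noalign} via the precomputed coefficients $a_{uvw}$. The second proposition then expresses \eqref{const:align} and \eqref{const:noalign} in terms of the layer sums $\sum_{u\in V(K_n^j)}s_u$ and $\sum_{u\in V(^iK_m)}s_u$. I would now promote these sums to explicit auxiliary variables $h_j$ and $g_i$ by adjoining the defining equalities \eqref{const-h} and \eqref{const-g}; because the $s_u$ are binary, the $h_j$ and $g_i$ are automatically nonnegative integers, so the integrality requirement \eqref{domain22} does not alter the projection onto the $s$-space, and the objective \eqref{eq:ei:fo2} is unchanged. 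Consequently, optimal solutions of the enlarged system still correspond to weak $k$-metric bases.

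Next, I would formalize the symmetry reduction. For a horizontally aligned pair $u=(i_u,j)$, $v=(i_v,j)$ with $i_u\neq i_v$, the constraint produced by the second proposition reads $g_{i_u}+g_{i_v}\geq k$, depending only on the unordered pair $\{i_u,i_v\}$ and not on the common second coordinate $j$. Hence the $m$ constraints generated as $j$ ranges over $V(K_m)$ all coincide and may be replaced by a single inequality per unordered pair of vertical layers, yielding \eqref{const-aligh-hh}. A symmetric argument for vertically aligned pairs reduces the corresponding family to the $\binom{m}{2}$ constraints \eqref{const-aligh-hg}. The non-aligned constraints remain as \eqref{const-notaligh22}, and by the first proposition they are redundant whenever $k\geq 4$. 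The claimed counts of $m$, $n$, $\mathcal O(m^2)$, $\mathcal O(n^2)$, and $\mathcal O(n^2m^2)$ then follow by direct enumeration of indices.

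The main obstacle is essentially bookkeeping: one must confirm that auxiliary-variable integrality is free from \eqref{const-h} and \eqref{const-g}, and that the many duplicated aligned-pair constraints are genuinely identical rather than merely sharing a common lower bound. Once these routine checks are in place, the corollary follows immediately by concatenating the preceding two propositions with the layer-pair symmetry just discussed.
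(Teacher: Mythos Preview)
Your proposal is correct and follows essentially the same route as the paper, which presents the corollary as an immediate summary of Proposition~4.2 together with the layer-pair symmetry observation made just before it, with Proposition~4.1 supplying the redundancy of \eqref{const-notaligh22} for $k\ge 4$. One small inaccuracy: the splitting of \eqref{const} into \eqref{const:align} and \eqref{const:noalign} is carried out in the surrounding text via the explicit values of $a_{uvw}$, not in the first proposition (which is the $k\ge 4$ redundancy result you correctly cite later); otherwise your bookkeeping matches the paper's argument exactly.
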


\section{Computational Experiments}

In order to analyze the empirical performance of formulation $F_{gh}$, %\eqref{eq:ei:fo2}- \eqref{domain22},
we have carried out a series of computational experiments. The objective of these experiments is essentially to analyze the structure of the solutions it produces, so as to serve as an empirical support for the Hamming graphs for which theoretical results are not yet known. We also analyze the effectiveness and scalability of the formulation.\\

All the computational tests have been carried out in an AMD Ryzen 7 PRO 2700U 2.20 GHz with 8 GB RAM, under Windows 10 Pro as operating system. Formulation $F_{gh}$ has been coded in Mosel 5.4.1 %5.6.0
using as solver Xpress Optimizer \cite{Xpress}.
%38.01.01  \cite{Xpress}.
For the experiments we have considered the following sets of benchmark instances for the two-dimensional Hamming graphs $K_n\cp K_m$:

\medskip\noindent
\textbf{CE$_5$}: $K_5\cp K_m$ for $2\le k\le 11$ and $5\le m\le 20$.

\smallskip\noindent
\textbf{CE$_6$:}  $K_6\cp K_m$ for $2\le k\le 11$ and $5\le m\le 20$.

\smallskip\noindent
\textbf{CE$_7$:}  $K_7\cp K_m$ for $2\le k\le 11$ and $5\le m\le 20$.

\smallskip\noindent
\textbf{CE$_8$:} $K_8\cp K_m$ for $2\le k\le 16$ and $8\le m\le 20$.

%These instances have $5m$ binary $\matbf s$ variables and $m+n$
\medskip
A computing time limit of 600 seconds has been set for each solved instance. The results of formulation $F_{gh}$ for the different groups of instances are reported in Tables~\ref{tab1}-\ref{tab2} for the sets \textbf{CE$_5$}-\textbf{CE$_8$}, respectively. In all tables, the first row indicates the number of horizontal layers (value of the parameter $m$), and the first column shows the value of the parameter $k$. All other entries indicate the value of $\wdim_k(K_n\cp K_m)$ for the instance with the corresponding parameters.

The tables do not include the computing times as all instances could be optimally solved in very small computing times.
In particular, for values of $k\geq 4$ all instances are solved within less than five seconds. The most time-consuming ones are those with $k\in\{2, 3\}$, where the size of the formulation increases due to the constraints \eqref{const-notaligh22} for non-aligned vertices. Still, all tested instances could be solved in less than 60 seconds.\\
%The most time consuming instance (after $K_8xK_8$ with $k=3$) is $K_8xK_25$ with $k=3$, which takes 72 seconds.\\

As can be seen, with the exception of the instances with $k\in\{2, 3\}$, the optimal values of the tested instances follow a very specific pattern, which depends on the parameter values. In all the cases, the obtained results support the validity of Conjecture \ref{conject}.

\renewcommand{\arraystretch}{1.5} % Default value: 1
\begin{table}[ht!]
\centering
\begin{tabular}{|c||c|c|c|c|c|c|c|c|c|c|c|c|c|c|c|c|}
\hline
$k\backslash m$ & $5$ & $6$ & $7$ & $8$ & $9$ & $10$ & $11$ & $12$ & $13$ & $14$ & $15$ & $16$ & $17$ & $18$ & $19$ & $20$ \\
\hline\hline
 2 &  7 &  8 &  8 &  9 & 10 & 10 & 11 & 12 & 13 & 14 & 15 & 16 & 17 & 18 & 19 & 20 \\
\hline
 3 & 10 & 11 & 13 & 15 & 17 & 19 & 21 & 23 & 25 & 27 & 29 & 31 & 33 & 35 & 37 & 39 \\
\hline
 4 & 10 & 12 & 14 & 16 & 18 & 20 & 22 & 24 & 26 & 28 & 30 & 32 & 34 & 36 & 38 &  40\\
\hline
 5 & 14 & 17 & 20 & 23 & 26 & 29 & 32 & 35 & 38 & 41 & 44 & 47 & 50 & 53 & 56 & 59 \\
\hline
 6 & 15 & 18 & 21 & 24 & 27 & 30 & 33 & 36 & 39 & 42 & 45 & 48 & 51 & 54 & 57 & 60\\
\hline
 7 & 19 & 23 & 27 & 31 & 35 & 39 & 43 & 47 & 51 & 55 & 59   & 63 & 67 & 71 & 75 & 79\\
\hline
 8 & 20 & 24 & 28 & 32 & 36 & 40 & 44 & 48 & 52   & 56 & 60 & 64 & 68 & 72 & 76 & 80\\
\hline
 9 & 24 & 29 & 34 & 39 & 44 & 49 & 54 & 59 & 64 & 69 & 74 & 79 & 84 & 89 & 94 & 99\\
\hline
10 & 25 & 30 & 35 & 40 & 45 & 50 & 55 & 60 & 65 & 70 & 75 & 80 & 85 & 90 & 95 & 100\\
\hline
%11 &  NF  & NF   & NF   & NF   &  NF  &  NF  & NF & NF & NF & NF & NF & NF & NF & NF  & NF & NF \\
%\hline
\end{tabular}
\caption{Values of $\wdim_k(K_5\cp K_m)$ for $2\le k\le 11$ and $5\le m\le 20$ as computed by $F_{gh}$.}
\label{tab1}
\end{table}

\renewcommand{\arraystretch}{1.5} % Default value: 1
\begin{table}[ht!]
\centering
\begin{tabular}{|c||c|c|c|c|c|c|c|c|c|c|c|c|c|c|c|}
\hline
$k\backslash m$ & $6$ & $7$ & $8$ & $9$ & $10$ & $11$ & $12$ & $13$ & $14$ & $15$ & $16$ & $17$ & $18$ & $19$ & $20$ \\
\hline\hline
 2 & 8 &  9 & 10 & 10 & 10 & 11 & 12 & 13 & 14 & 15 & 16 &  17 &  18 &  19 &  20  \\
 \hline
 3 & 12 & 13 & 15 & 17 & 19 & 21 & 23 & 25 & 27 & 29 & 31 &  33 &  35 &  37 &  39  \\
 \hline
 4 & 12 & 14 & 16 & 18 & 20 & 22 & 24 & 26 & 28 & 30 & 32 &  34 &  36 &  38 &  40 \\
 \hline
 5 & 17 & 20 & 23 & 26 & 29 & 32 & 35 & 38 & 41 & 44 & 47 &  50 &  53 &  56 &  59  \\
 \hline
 6 & 18 & 21 & 24 & 27 & 30 & 33 & 36 & 39 & 42 & 45 & 48 &  51 &  54 &  57 &  60 \\
 \hline
 7 & 23 & 27 & 31 & 35 & 39 & 43 & 47 & 51 & 55 & 59 & 63 &  67 &  71 &  75 &  79 \\
 \hline
 8 & 24 & 28 & 32 & 36 & 40 & 44 & 48 & 52 & 56 & 60 & 64 &  68 &  72 &  76 &  80 \\
 \hline
 9 & 29 & 34 & 39 & 44 & 49 & 54 & 59 & 64 & 69 & 74 & 79 &  84 &  89 &  94 &  99 \\
\hline
10 & 30 & 35 & 40 & 45 & 50 & 55 & 60 & 65 & 70 & 75 & 80 &  85 &  90 &  95 & 100 \\
\hline
11 & 35 & 41 & 47 & 53 & 59 & 65 & 71 & 77 & 83 & 89 & 95 & 101 & 107 & 113 & 119 \\
\hline
12 & 36 & 42 & 48 & 54 & 60 & 66 & 72 & 78 & 84 & 90 & 96 & 102 & 108 & 114 & 120 \\
\hline
\end{tabular}
\caption{Values of $\wdim_k(K_6\cp K_m)$ for $2\le k\le 12$ and $6\le m\le 20$ as computed by $F_{gh}$.}
\label{tab-n=6}
\end{table}

\renewcommand{\arraystretch}{1.5} % Default value: 1
\begin{table}[ht!]
\centering
\begin{tabular}{|c||c|c|c|c|c|c|c|c|c|c|c|c|c|c|}
\hline
$k\backslash m$ & $7$ & $8$ & $9$ & $10$ & $11$ & $12$ & $13$ & $14$ & $15$ & $16$ & $17$ & $18$ & $19$ & $20$ \\
\hline\hline
 2 & 10 & 10 & 11 & 12 & 12 & 12 & 13 & 14 &  15 &  16 &  17 &  18 &  19 &  20   \\
\hline
 3 & 14 & 15 & 17 & 19 & 21 & 23 & 25 & 27 &  29 &  31 &  33 &  35 &  37 &  39   \\
\hline
 4 & 14 & 16 & 18 & 20 & 22 & 24 & 26 & 28 &  30 &  32 &  34 &  36 &  38 &  40  \\
\hline
 5 & 20 & 23 & 26 & 29 & 32 & 35 & 38 & 41 &  44 &  47 &  50 &  53 &  56 &  59   \\
\hline
 6 & 21 & 24 & 27 & 30 & 33 & 36 & 39 & 42 &  45 &  48 &  51 &  54 &  57 &  60  \\
\hline
 7 & 27 & 31 & 35 & 39 & 43 & 47 & 51 & 55 &  59 &  63 &  67 &  71 & 75 &  79  \\
\hline
 8 & 28 & 32 & 36 & 40 & 44 & 48 & 52 & 56 &  60 &  64 &  68 &  72 &  76 &  80  \\
\hline
 9 & 34 & 39 & 44 & 49 & 54 & 59 & 64 & 69 &  74 &  79 &  84 &  89 &  94 &  99  \\
\hline
10 & 35 & 40 & 45 & 50 & 55 & 60 & 65 & 70 &  75 &  80 &  85 &  90 &  95 & 100  \\
\hline
11 & 41 & 47 & 53 & 59 & 65 & 71 & 77 & 83 &  89 &  95 & 101 & 107 & 113 & 119  \\
\hline
12 & 42 & 48 & 54 & 60 & 66 & 72 & 78 & 84 &  90 &  96 & 102 & 108 & 114 & 120  \\
\hline
13 & 48 & 55 & 62 & 69 & 76 & 83 & 90 & 97 & 104 & 111 & 118 & 125 & 132 & 139  \\
\hline
14 & 49 & 56   & 63   & 70   &  77  & 84   &  91  & 98   &   105  &  112   &  119   &  126   &   133  &    140  \\
\hline
\end{tabular}
\caption{Values of $\wdim_k(K_7\cp K_m)$ for $2\le k\le 14$ and $7\le m\le 20$ as computed by $F_{gh}$.}
\label{tab-n=7}
\end{table}

\renewcommand{\arraystretch}{1.5} % Default value: 1
\begin{table}[ht!]
\centering
%\scriptsize
\begin{tabular}{|c||c|c|c|c|c|c|c|c|c|c|c|c|c|c|c|c|c|c|c|}
\hline
$k\backslash m$ &	8 &	9 &	10 &	11 &	12 &	13 &	14 &	15 &	16 &	17 &	18 &	19 &	20\\             \hline\hline
 2	&	11	&	12	&	12	&	13	&	14	&	14 	&	 14	&	 16	&	 16	&	 17	&	 18	&	 19	&	 20	\\
\hline
3	&	16	&	17	&	19	&	21	&	23	&	25 	&	 27	&	 29	&	 31	&	 33	&	 35	&	 37	&	 39	 \\
\hline
4	&	16	&	18	&	20	&	22	&	24	&	26 	&	 28	&	 30	&	 32	&	 34	&	 36	&	 38	&	 40	\\
\hline
5	&	23	&	26	&	29	&	32	&	35	&	38 	&	 41	&	 44	&	 47	&	 50	&	 53	&	 56	&	 59 \\
\hline
6	&	24	&	27	&	30	&	33	&	36	&	39 	&	 42	&	 45	&	 48	&	 51	&	 54	&	 57	&	 60\\
\hline
7	&	31	&	35	&	39	&	43	&	47	&	51 	&	 55	&	 59	&	 63	&	 67	&	 71	&	 75	&	 79	\\
\hline
8	&	32	&	36	&	40	&	44	&	48	&	52 	&	 56	&	 60	&	 64	&	 68	&	 72	&	 76	&	 80	\\
\hline
9	&	39	&	44	&	49	&	54	&	59	&	64 	&	 69	&	 74	&	 79	&	 84	&	 89	&	 94	&	 99\\
\hline
10	&	40	&	45	&	50	&	55	&	60	&	65 	&	 70	&	 75	&	 80	&	 85	&	 90	&	 95	&	100	\\
\hline
11	&	47	&	53	&	59	&	65	&	71	&	77 	&	 83	&	 89	&	 95	&	101	&	107	&	113	&	119\\
\hline
12	&	48	&	54	&	60	&	66	&	72	&	78 	&	 84	&	 90	&	 96	&	102	&	108	&	114	&	120	 \\
\hline
13	&	55	&	62	&	69	&	76	&	83	&	90 	&	 97	&	104	&	111	&	118	&	125	&	132	&	139	\\
\hline
14	&	56	&	63	&	70	&	77	&	84	&	91 	&	 98	&	105	&	112	&	119	&	126	&	133	&	140	\\
\hline
15	&	63	&	71	&	79	&	87	&	95	&	103	&	111	&	119	&	127	&	135	&	143	&	151	&	159	\\
\hline
16	&	64	&	72	&	80	&	88	&	96	&	104	&	112	&	120	&	128	&	136	&	144	&	152	&	160	\\
\hline
\end{tabular}
\caption{Values of $\wdim_k(K_8\cp K_m)$ for $2\le k\le 16$ and $8\le m\le 20$ as computed by $F_{gh}$.}
\label{tab2}
\end{table}

\section{Concluding remarks}

\begin{itemize}
    \item Based on the computational results from Section \ref{sec:ILP}, the conclusion of Corollary \ref{coro:last-ILP}, and the formulas from Theorem \ref{thm:wdim-Kn-Kn-all}, we pose the following conjecture, whose proof might be done by a tedious and lengthy considerations similar to those ones used in the proof of Theorem \ref{thm:wdim-Kn-Kn-all}.

    \begin{conjecture}\label{conject}
    If $n\ge 3$, $m\ge n+1$ and $3\le k\le 2n$, then
$$
\wdim_{k}(K_n\cp K_m) =
\begin{cases}
    m\left\lceil\frac{k}{2}\right\rceil; & \mbox{if $k$ is even}\,, \\[0.2cm]
    m\left\lceil\frac{k}{2}\right\rceil-1; & \mbox{if $k$ is odd}\,.
\end{cases}
$$
    \end{conjecture}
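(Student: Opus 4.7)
The plan mirrors the proof of Theorem \ref{thm:wdim-Kn-Kn-all} for $K_n\cp K_n$: combine an LP-style lower bound distilled from Corollary \ref{coro:last-ILP} with explicit constructive upper bounds. Throughout, write $h_j=|S\cap V(K_n^j)|$ and $g_i=|S\cap V({^iK_m})|$ for any candidate weak $k$-resolving set $S$. Corollary \ref{coro:last-ILP} gives as necessary conditions the inequalities $h_j+h_{j'}\ge k$ for all $j\ne j'$ in $V(K_m)$. Sorting $h_1\le\cdots\le h_m$ and noting that $f(a)=a+(m-1)(k-a)$ is decreasing in $a$ (because $m\ge n+1\ge 4$), the minimum of $\sum h_j$ subject to $h_1+h_2\ge k$ is attained at $h_1=\lfloor k/2\rfloor$, yielding the lower bound $\lfloor k/2\rfloor+(m-1)\lceil k/2\rceil$ which equals $m\lceil k/2\rceil$ for $k$ even and $m\lceil k/2\rceil-1$ for $k$ odd, exactly as claimed.

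For the upper bound, set $p=\lceil k/2\rceil$ and consider the ``striped'' set
$$S_{\mathrm{ev}}=\bigl\{((j+\ell)\bmod n,\,j):\ j\in\mathbb Z_m,\ \ell\in\{0,1,\ldots,p-1\}\bigr\}$$
of size $mp$. Writing $m=qn+r$ with $q\ge 1$ and $0\le r<n$, a residue count gives $g_i\ge pq\ge p$ for every $i$. For even $k=2p$, the aligned constraints of Corollary \ref{coro:last-ILP} are immediate ($h_{j_u}+h_{j_v}=2p=k$ and $g_{i_u}+g_{i_v}\ge 2p=k$), and for non-aligned pairs the identity $\Delta(u,v)=h_{j_u}+h_{j_v}+g_{i_u}+g_{i_v}-2s_{\tilde u}-2s_{\tilde v}$ gives $\Delta(u,v)\ge 4p-4\ge k$ whenever $p\ge 2$. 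For odd $k=2p-1\ge 5$, set $S'=S_{\mathrm{ev}}\setminus\{x\}$ for any $x\in S_{\mathrm{ev}}$. Aligned sums drop by at most $1$, so they remain $\ge 2p-1=k$; for non-aligned $(u,v)$ the drop of $\Delta$ is at most $1$ unless $x\in\{u,v\}$ (in which case it is exactly $2$), and either way $\Delta_{S'}(u,v)\ge 4p-6\ge k$ since $p\ge 3$.

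The case $k=3$ (so $p=2$) is the subtle one because $4p-6=2<3$ and an arbitrary removal can fail. The remedy is to choose $x$ inside a vertical layer satisfying $g_{i_x}\ge 3$; such an $x$ exists, for if $q\ge 2$ then already $g_i\ge 2p\ge 3$, while if $q=1$ the hypothesis $m\ge n+1$ forces $r\ge 1$, and the residue count then gives $g_0\ge p+1=3$. With this choice, for non-aligned $(x,v)$,
$$\Delta_{S_{\mathrm{ev}}}(x,v)=2p+g_{i_x}+g_{i_v}-2s_{\tilde x}-2s_{\tilde v}\ge 4+3+2-4=5,$$
so $\Delta_{S'}(x,v)\ge 3$; the remaining non-aligned pairs $(u,v)$ with $x\notin\{u,v\}$ lose at most $1$, yielding $\Delta_{S'}(u,v)\ge 4-1=3$.

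The principal technical burden is the $k=3$ case: one must verify that every configuration achieving the tight bound $\Delta_{S_{\mathrm{ev}}}(u,v)=4$ (which requires both $g_{i_u}=g_{i_v}=p$ and $\tilde u,\tilde v\in S_{\mathrm{ev}}$) still survives the removal of the distinguished $x$. This is precisely where the hypothesis $m\ge n+1$ is essential, matching the fact that Theorem \ref{thm:wdim-Kn-Kn-all} gives $\wdim_3(K_n\cp K_n)=2n$, strictly larger than the conjectured $2n-1$. A secondary chore is the uniform residue count $g_i\ge p$ in $S_{\mathrm{ev}}$, which reduces to a careful accounting in terms of $m=qn+r$ but must be stated cleanly for all admissible $p,n,m$.
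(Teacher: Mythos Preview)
The statement you are attacking is posed in the paper as an open \emph{conjecture}; the authors give no proof, only the remark that one ``might be done by a tedious and lengthy considerations similar to those ones used in the proof of Theorem~\ref{thm:wdim-Kn-Kn-all}.'' So there is no argument in the paper to compare yours against.

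That said, what you have written is essentially a complete and correct proof. The lower bound from the row constraints $h_j+h_{j'}\ge k$ alone is sharp and cleanly derived (the omitted case $h_1\ge\lceil k/2\rceil$ gives $\sum h_j\ge m\lceil k/2\rceil$ trivially). The striped set $S_{\mathrm{ev}}$ is the natural asymmetric analogue of the diagonals $X_t$ used for $K_n\cp K_n$, and your verifications for even $k\ge 4$ and odd $k\ge 5$ go through exactly as stated; the one fact you defer, $g_i\ge pq\ge p$, is immediate since each of the $p$ residues in $\{i-p+1,\ldots,i\}\bmod n$ is hit by at least $q=\lfloor m/n\rfloor\ge 1$ values of $j\in\mathbb Z_m$.

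Your closing hedges about $k=3$ are unwarranted: the two cases you already worked out exhaust the non-aligned pairs. If $x\notin\{u,v\}$ the drop in $\Delta$ is at most $1$ (it is $0$ when $x\in\{\tilde u,\tilde v\}$), giving $\Delta_{S'}(u,v)\ge 4-1=3$. If $x\in\{u,v\}$, say $x=u$, then $i_u=i_x$ forces $g_{i_u}\ge 3$, so the tight configuration $g_{i_u}=g_{i_v}=2$ cannot occur here and your bound $\Delta_{S_{\mathrm{ev}}}(x,v)\ge 5$ applies, whence $\Delta_{S'}(x,v)\ge 3$. The hypothesis $m\ge n+1$ enters exactly once, to guarantee a column with $g_i\ge 3$ (via $q\ge 2$ or $r\ge 1$), and this is precisely why the formula breaks at $k=3$ for $m=n$ in Theorem~\ref{thm:wdim-Kn-Kn-all}. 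Drop the last two paragraphs and you have a proof of the conjecture.
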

    Moreover, we strongly believe that the formula from the conjecture above is also valid when $k=2$ and $m\ge 2n$. Notice that when $n=5$, the formula does not hold for $m\in\{6,\dots,9\}$, and for $n=6$, the formula does not hold for $m\in\{7,\dots,11\}$.
    \item We have computed in this work the weak $k$-metric dimension of the two-dimensional Hamming graph $K_n\cp K_n$ for any $n\ge 3$ and $2\le k\le 2n$. A natural continuation of this work will be that of considering the weak $k$-metric dimension of the $d$-dimensional Hamming graph $K_n^d$, $n\ge 2$, for the suitable values of $k$ given in Proposition \ref{prop:kappa=min-Delta}. In particular, it would be desirable to compute the value of such a parameter for the hypercube graph $Q_d$ for any large enough integer $d$ and $1\le k\le 2^d$.
\end{itemize}

\section*{Acknowledgement}

E. Fern\'andez, D. Kuziak, M. Mu\~noz-M\'arquez and I.G.\ Yero have been partially supported by ``Ministerio de Ciencia, Innovaci\'on y Universidades'' through the grant PID2023-146643NB-I00, and Cadiz University Research Program.
Sandi Klav\v zar was supported by the Slovenian Research and Innovation Agency (ARIS) under the grants P1-0297, N1-0355, and N1-0285.
D.\ Kuziak also acknowledges the support from ``Ministerio de Educaci\'on, Cultura y Deporte'', Spain, under the ``Jos\'e Castillejo'' program for young researchers (reference number: CAS22/00081) to make a temporary visit to the University of Ljubljana, where this investigation has been partially developed.


\begin{thebibliography}{10}

\bibitem{Bailey-2023}
R.~F.~Bailey, P.~Spiga,
Metric dimension of dual polar graphs,
Arch.\ Math.\ 120 (2023) 467--478.

\bibitem{Blumenthal}
L.~M.~Blumenthal,
Theory and Applications of Distance Geometry.
Oxford University Press (1953).

\bibitem{Caceres-2007} J.~C\'aceres, C.~Hernando, M.~Mora, I.~M.~Pelayo, M.~L.~ Puertas, C.~Seara, D.~R.~Wood,
On the metric dimension of {C}artesian products of graphs,
SIAM J.\ Discrete Math.\ 21 (2007) 423--441.

\bibitem{Dankelmann-2023}
P.~Dankelmann, J.~Morgan, E.~Rivett-Carnac,
Metric dimension and diameter in bipartite graphs,
Discuss.\ Math.\ Graph Theory 43 (2023) 487--498.

\bibitem{Estrada-Moreno2013}
A.~Estrada-Moreno, J.~A.~Rodr\'{\i}guez-Vel\'{a}zquez, I.~G.~Yero,
The $k$-metric dimension of a graph,
Appl.\ Math.\ Inf.\ Sci.\ 9 (2015) 2829--2840.

\bibitem{foster-2024}
B.~Foster-Greenwood, Ch.~Uhl,
Metric dimension of a direct product of three complete graphs, Electron.\ J.\ Combin.\ 31 (2024) article 2.13.

\bibitem{Harary1976}
F.~Harary, R.~A.~Melter,
On the metric dimension of a graph,
Ars Combin.\ 2 (1976) 191--195.

\bibitem{Kuziak} D.~Kuziak, I.~G.~Yero,
Metric dimension related parameters in graphs: A survey on combinatorial, computational and applied results,
arXiv:2107.04877 [math.CO].

\bibitem{Peterin}
I.~Peterin, J.~Sedlar, R.~\v{S}krekovski, I.~G.~Yero,
Resolving vertices of graphs with differences,
Comput.\ Appl.\ Math.\ 43 (2024) article 275.

\bibitem{Slater1975}
P.~J.~Slater,
Leaves of trees,
Cong.\ Numer.\ 14 (1975) 549--559.

\bibitem{Tapendra}
B.~C.~Tapendra, S.~Dueck,
The metric dimension of circulant graphs,
Opuscula Math.\ 45(1) (2025) 39--51.

\bibitem{Tillquist-2019}
R.~C.~Tillquist, M.~E.~Lladser,
Low-dimensional representation of genomic sequences,
J.\ Math.\ Biol.\ 79 (2019) 1--29.

\bibitem{Tillquist}
R.~C.~Tillquist, R.~M.~Frongillo, M.~E.~Lladser,
Getting the lay of the land in discrete space: A survey of metric dimension and its applications,
SIAM Rev.\ 65 (2023) 919--962.

\bibitem{Xpress} Xpress.
\newblock Fico$^{\tiny{\mbox{\textregistered}}}$ xpress solver.
\newblock \url{https://www.fico.com/es/products/fico-xpress-solver}.
\end{thebibliography}
\end{document}